\documentclass[review]{elsarticle}

\usepackage{lineno,hyperref}
\modulolinenumbers[5]

\journal{Journal of \LaTeX\ Templates}









\bibliographystyle{elsarticle-num}
\usepackage{natbib}
\usepackage{rotating}
\usepackage{graphics,pst-3d,pst-node,pst-plot,pst-text}
\usepackage[normalem]{ulem}
 \bibpunct[, ]{[}{]}{,}{a}{}{,}%
 \def\newblock{\ }%
 %

\newcommand{\CC}{{\cal C}}

\newcommand{\RR}{{\cal R}}

\newcommand{\UU}{{\cal U}}
\def\R{I\!\!R}

\usepackage{amsmath}
\usepackage{amsthm}
 \newtheorem{proposition}{Proposition}[section]
 \newtheorem{lemma}{Lemma}[section]
 \newtheorem{corollary}{Corollary}[section]
 \newtheorem{definition}{Definition}[section]

\begin{document}

\begin{frontmatter}


\title{Robust DEA efficiency scores: A probabilistic/combinatorial approach\footnote{This research has been partly supported by the Spanish Ministerio de Economía y Competitividad, through grants MTM2013-43903-P and  MTM2015-68097-P(MINECO/FEDER), and Ministerio de Economía, Industria y Competitividad, through grant MTM2016-76530-R (AEI/FEDER, UE).}}

\author{Mercedes Landete, Juan F. Monge\footnote{Corresponding author} and Jos\'e L. Ruiz}
\address{Centro de Investigaci\'{o}n Operativa, Universidad Miguel Hern\'{a}ndez. \\ Avda. de la Universidad, s/n, 03202-Elche (Alicante), Spain.\\
 E-mail addresses: \,  landete@umh.es \, (M. Landete),\, monge@umh.es\, (J.F. Monge),\, jlruiz@umh.es\, (J.L. Ruiz)}

%
%
%
\begin{abstract}
In this paper we propose robust efficiency scores for the scenario
in which the specification of the inputs/outputs to be included in
the DEA model is modelled with a probability distribution. This
probabilistic approach allows us  to obtain three different  robust
efficiency scores: the Conditional Expected Score, the
Unconditional Expected Score and the  Expected score under the
assumption of Maximum Entropy principle.  The calculation of the three efficiency scores involves
the resolution of  an exponential number of linear problems. The
algorithm presented in this paper allows to solve over 200
millions of linear problems in an affordable time when considering
up 20 inputs/outputs and 200 DMUs. The approach proposed is illustrated with an application to the assessment of professional tennis players.  \end{abstract}

\begin{keyword}
Data envelopment
analysis, Model specification, Efficiency measurement,
Robustness.
\end{keyword}

\end{frontmatter}


\section{Introduction}

Data Envelopment Analysis (DEA), as introduced in
\cite{Charnes1978}, assesses relative efficiency of decision making
units (DMUs) involved in production processes. DEA models provide
efficiency scores of the DMUs in the form of a weighted sum of
outputs to a weighted sum of inputs. These scores are the result of
an evaluation of each unit within a technology which is empirically
constructed from the observations by assuming some postulates such
as convexity, constant or variable returns to scale and free
disposability. The selection of the inputs and outputs to be considered in the analysis provides a description of the underlying technology, thus becoming one of the key issues of model specification in DEA.  In practical applications, the
prior knowledge and experience may lead the analyst to select some
variables considered as essential to represent this technology.
However, as discussed in \cite{pastor2002}, there are often other
variables whose inclusion in the model the analyst is not always
sure about. This situation can be addressed in different ways. The
methods for the selection of variables constitute an important body
of research to deal with this issue. The idea is to complement the
prior knowledge and experience with information provided by the
data, so that these methods may help make a decision about the
candidate variables. In this line, the F-tests developed by
\citet{banker1993,banker1996-2} and those in \cite{pastor2002} allow to statistically evaluating the role of inputs/outputs. These tests are empirically
analyzed (and compared among themselves) with simulations in \citet{sirvent2005}. 
See also \cite{natajara2011}, which provides comparisons between some of the methods widely used to guide variable selection: the tests in \cite{pastor2002}, Principal Component Analysis (PCA-DEA), regression-based tests and bootstrapping. To the same end, \cite{wagner2007} propose a stepwise approach. In \cite{li2016} it is proposed a method based on the Akaike''s information criteria (AIC), which mainly focuses on assessing the importance of subset of original variables rather than testing the marginal role of variables one by one as in many other methods.

Correlation either between efficiency scores and variables (for
their incorporation into the model) or between variables (in order
to remove redundant factors) has also been used for the selection of
variables, although it has been widely deemed as a criterion of
limited value. See \cite{jenkins2003} for discussions. This latter paper proposes instead an approach based on partial covariance. \cite{eskelinen2017} compares the approach in \cite{jenkins2003} with that in \cite{pastor2002} in an empirical retail bank context. Unlike previous research, \cite{edirisinghe2010} develop a method for the selection of variables that employs a reward variable observed exogenous to the operation of DMUs. See also \cite{luo2012}, which uses the concept of cash value added (CVA) for choosing variables.

A different approach, which is the one we follow in
the present paper, is based on efficiency scores which are
robust against the selection of the variables, while at the same
time taking into account the inherent uncertainty on the inclusion
of some inputs/outputs in the models. In the literature, some
authors have undertaken some exploratory work to examine the
robustness of results as they relate to the production function
specification. \cite{roll1989} present one such study in which ten
different combinations of outputs are tried out with three inputs to
evaluate the efficiency of maintenance units in the Israeli Air
Force; \cite{Valdmanis1992} tests ten different specifications of a
hospital efficiency model, and \cite{ahn1993} test four different
variable sets for each of the DEA models considered in their study
on higher education sector efficiency. For his part,
\cite{smith1997} investigates the implications of model
misspecification by means of a simulation study. See also
\cite{Galagedera2003} for another simulation study with large sample
which, in particular, analyze the effects of omission of relevant
input and inclusion of irrelevant input variables on technical
efficiency estimates.

    The approach we propose goes one step further than the exploratory work done in the papers just mentioned, in the sense that we not only examine the efficiency scores that result from several combinations of inputs and outputs but we take into account all the scenarios associated with all of the specifications of inputs and outputs that could be considered once a given set of candidate variables is determined. In addition, we allow for the uncertainty regarding the inclusion of variables in the DEA models. The idea is the following: The inclusion of an input/output in the set of selected variables is modelized here through the probability of that variable being considered in the DEA model. For example, if such probability is 0.8, this could be interpreted as saying that 80\% of experts would include the corresponding variable in the DEA model. As a result, each specification of the inputs and outputs to be included in the DEA model has a probability of occurrence and, therefore, the efficiency score of a DMU would be a random variable, which takes as values the DEA efficiency scores associated with each specification of inputs and outputs with some probability. The robust efficiency score of a given DMU is then defined as the expected value of that random variable.

The  consideration of all combinations of inputs/outputs gives rise to 
an exponential number of problems that must be solved.  To solve
such  large number of problems, an efficient algorithm is needed.
In this paper  an exact algorithm is  developed, which allows us  to
solve over 200 millions of linear problems when considering up 20
inputs/outputs and 200 DMU's. This  algorithm reduces   the time
and the number of problems to solve in half, approximately.

    We illustrate the use of the proposed approach with an application to the assessment of professional tennis players. The Association of Tennis Professionals (ATP) assesses players through the points earned in the different tournaments they play during the season. Therefore, ATP assesses  the competitive performance of players. However,  ATP also provides statistics regarding their game performance. For instance, its official webpage reports data regarding 9 game factors such as the percentage of 1st serve points won or the percentage of return games won. Obviously, it would be interesting to have available an index of the game overall performance of players that aggregates into a single scalar the information provided by the statistics of the factors that are considered. The DEA approach we propose provides a score of the player game performance which is robust against the selection of game factors that is considered for the analysis. \cite{ruiz2013} also deal with the assessment of game performance of tennis players, but with an approach based on the cross-efficiency evaluation that consider all of the 9 game factors available in the ATP statistics.

The paper is organized as follows: In Section 2  a short introduction of DEA, through the
original CCR  (Charnes, Cooper and Rhodes) model, is presented. 
In Section 3 we define the the robust efficiency score,  and  Section 4  presents three  robust DEA efficiency scores for a
probabilistic specification of the inputs and outputs.  The exact solution algorithm used for the calculation of
the robust scores is described in Section 5. In Section 6 the
proposed algorithm is used for obtaining the robust scores in a
case study. Finally, some conclusions and outlines for future work
are given in Section 7.

\section{DEA Efficiency Scores}

In a DEA efficiency analysis, we have $n$ $DMUs$ which use $m$ inputs to produce $s$ outputs. Each $DMU_j$ can be described by means of the vector
$(X_j,Y_j)=(x_{1j},\dots,x_{mj},y_{1j},\dots,y_{sj})$, $j=1,\dots,n.$

As said before, the DEA models assess efficiency with reference to
an empirical technology or production possibility set  which is
constructed from the observations by assuming some postulates. For
instance, if we assume convexity, constant returns to scale, and
free disposability (which means that if we can produce $Y$ with
$X$, then we can both produce less than $Y$ with $X$ and $Y$ with
more than $X$), then it can be shown that the technology can be characterized as  the
set $T=\{(X,Y)\in \R_+^{m+s}  / X\geq \sum_{j=1}^{n}\lambda_j X_j,
\, Y\leq \sum_{j=1}^{n}\lambda_j Y_j,  \, \lambda_j \geq 0, \,
j=1,\dots, n \} .$ The original DEA model by \cite{Charnes1978},
the CCR model, provides as measure of the relative efficiency of a
given $DMU_0$ the minimum value $\theta_0$ such that $(\theta_0
X_0, Y_0)\in T$. Therefore, this value can obviously be obtained
by solving the following linear programming  problem.

\begin{equation}
\begin{aligned}
& \min & & \theta_0 \\
& \text{s.t.}& & \sum_{j=1}^{n}\lambda_j x_{ij}  \leq   \theta_0 x_{i0}, \qquad  i=1,\dots,m \\
&& & \sum_{j=1}^{n}\lambda_j y_{rj}  \geq  y_{r0}, \qquad    r=1,\dots,s \\
&&& \lambda_j \geq  0, \qquad    \forall j
\end{aligned}
\label{CRS}
\end{equation}

\noindent which is the so-called primal envelopment formulation of
the CCR model. Thus, $DMU_0$ is said to be efficient if, and only
if, its efficiency score equals 1. Otherwise, it is inefficient,
and the lower the efficiency score, the lesser its efficiency. The
model in \cite{Charnes1984}, the so-called BCC model,  is that resulting from eliminating the
constant returns to scale postulate and allowing for variable
returns to scale  in the production possibility set. Its
formulation is the linear problem resulting from adding the
constraint $\sum_{j=1}^n \lambda_j =1$ to (\ref{CRS}).

\section{Robust DEA Efficiency Scores:  A probabilistic approach}

Throughout the paper we suppose that we have a set of candidate
variables to be included in the efficiency model (\ref{CRS}),
$\CC=\{z_1,\dots,z_q\}$, which can be either inputs or outputs. It
is assumed that the probability of including in (\ref{CRS}) a
given candidate variable, say $z_c$, is $p_c$. Thus, the inclusion
of each of the variables in $\CC$ into (\ref{CRS}) can be
determined by means of some independent random variables $B_c$
distributed Bernoulli, $Be(p_c)$, $c=1,\dots,q$. As a result, all
the scenarios associated with all the possible specifications of
(\ref{CRS}) are determined by the random vector
$B=(B_1,\dots,B_q)$. If we denote by $p=(p_1,\dots,p_q)$, then the
probability distribution of $B$ is

\[ \displaystyle P_p(B=b)= \prod_{c=1}^{q}  p_c^{b_c} (1-p_c)^{(1-b_c)}\hspace{2cm}b=(b_1,\dots,b_q)\in \{0,1\}^q.\]

Let  $\theta_0^b$ be the efficiency score of $DMU_0$ provided by
(\ref{CRS}) when the specification of the model is determined by
$b\in \{0,1\}^q.$ We denote by $\Theta_0$ the random variable
which takes the value $\theta_0^b.$ Then, the expected efficiency
score of $DMU_0$ is

\begin{align}
\displaystyle E_p(\Theta_0)= \sum_{b\in \{0,1\}^q}    \left (\prod_{c=1}^{q}  p_c^{b_c} (1-p_c)^{(1-b_c)}  \right )  \,      \theta_0^b
\label{esperanza1.1}
\end{align}

For consistency, we define $\theta_0^{b=(0,\dots,0)} =1$, i.e, we
assume that  all DMUs are efficient when the input/output set is
empty. The value of $\theta_0^{b=(0,\dots,0)}$  is not relevant to
compare the expected efficiency scores between DMUs, because
$\theta_0^{b=(0,\dots,0)}$ is the same constant for all of them.

These expected values can be seen as DEA efficiency scores which are robust against the selection of variables that is made for the efficiency model.

\section{The specification of $p$}

The key for obtaining the expected efficiency scores
(\ref{esperanza1.1}) is in the specification of $p$. Three
different approaches to deal with this issue are proposed below.

\subsection{Using expert opinion}

The probability of selection of candidate variables can be
determined by using information from experts, if available. The
values $p_c's$ can be set reflecting the personal belief of a
given expert regarding the importance to be attached to  the corresponding
variables $z_c's$ in the underlying production process.
Alternatively, these probabilities can be estimated. If several
experts are asked to give their opinion about whether or not to
include a given $z_c$ in (\ref{CRS}) (in presence of the remaining
variables), then the proportion of those in favor of such
inclusion provides an estimation of $p_c$.

\subsection{Maximizing the Entropy}

The definition of $entropy$ was introduced by \cite{shannon1948}. In the probabilistic context, the entropy $H(p)$ is a measure of the information provided by $p$, where high values of entropy corresponds to less information o more uncertainty, i.e., the maximum principle Entropy is used in Statistics to obtain a value for the parameters with the least informative distribution assumptions.  If information from experts is not available and the probabilities of selection of candidate variables are unknown, we can obtain the value of $p$ that maximizes the entropy in the context of the approach previously set out for providing robust DEA efficiency scores.

The Entropy function associated with the discrete random variable  $\Theta_0$ is:

\[ \displaystyle H(p) = - \sum_{b\in \{0,1\}^{q}}  P_p(\Theta_0=\theta_0^{b})  \, \log (P_p(\Theta_0=\theta_0^{b})  )  \]

where $P_p(\Theta_0=\theta_0^{b}) =  \prod_{c=1}^{q}  p_c^{b_c} (1-p_c)^{(1-b_c)}. $


\begin{lemma}{\bf  [\cite{entropy} and \cite{conrad}]}
Suppose that a random variable $X$ takes exactly $\ell$
values with positive probability. Then $H(X)\leq \log \ell$.
\end{lemma}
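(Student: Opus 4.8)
The plan is to prove this classical bound—which says that among all distributions on $\ell$ outcomes the uniform one maximizes entropy—by applying Jensen's inequality to the concave logarithm. First I would restrict the sum defining $H(X)$ to the $\ell$ outcomes carrying positive probability, writing $H(X)=\sum_{i=1}^{\ell} p_i \log(1/p_i)$ with each $p_i>0$. This restriction is what makes the quantities $1/p_i$ well defined, and it is precisely the reason the hypothesis ``exactly $\ell$ values with positive probability'' appears in the statement.

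The key step is to read this sum as an expectation of the random quantity $\log(1/p_I)$, where the index $I$ is itself distributed according to $(p_1,\dots,p_\ell)$. Since $\log$ is concave and the weights $p_i$ sum to one, Jensen's inequality gives
\[
H(X)=\sum_{i=1}^{\ell} p_i \log\frac{1}{p_i} \;\le\; \log\!\left(\sum_{i=1}^{\ell} p_i \cdot \frac{1}{p_i}\right).
\]
The inner sum collapses to $\sum_{i=1}^{\ell} 1 = \ell$, yielding $H(X)\le \log \ell$ at once.

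An equivalent route, which avoids quoting Jensen directly, uses only the elementary inequality $\ln x \le x-1$. Since $\sum_i p_i=1$, one may write $H(X)-\log\ell = \sum_{i=1}^{\ell} p_i \log\frac{1}{\ell p_i}$, and bounding each logarithm by $\big(\tfrac{1}{\ell p_i}-1\big)$ (times the positive constant $\log e$ that converts the base) makes the right-hand side telescope to $\big(\sum_i \tfrac1\ell - \sum_i p_i\big)\log e = (1-1)\log e = 0$. Either path delivers the same conclusion.

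The argument is short, so there is no deep obstacle; the only points requiring care are ensuring that Jensen is applied to a genuine probability-weighted average (the weights must sum to one) and that the sum runs over exactly the positive-probability support, so that $1/p_i$ is meaningful. I would also record that equality holds if and only if all $p_i=1/\ell$, that is, for the uniform distribution—the very fact the paper relies on when selecting the maximum-entropy specification of $p$.
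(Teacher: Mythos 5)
Your proof is correct on both routes. The Jensen step is applied properly: you restrict to the $\ell$ atoms of positive probability so that $1/p_i$ is defined, you view $H(X)=\sum_i p_i\log(1/p_i)$ as a probability-weighted average with weights summing to one, and concavity of $\log$ gives $H(X)\le\log\bigl(\sum_i p_i\cdot(1/p_i)\bigr)=\log\ell$. The elementary alternative via $\ln x\le x-1$ is also carried out correctly, and your equality condition (all $p_i=1/\ell$) is the right one. One point of comparison worth noting: the paper itself does not prove this lemma at all --- it imports it by citation from \cite{entropy} and \cite{conrad} and uses it as a black box in the proof of Proposition 1 (where the only work done is computing $H(p^*)=\log 2^q$ for the uniform specification). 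So your argument is not an alternative to a proof in the paper; it supplies the proof the paper delegates to its references, by what is essentially the standard textbook route found there. This makes the paper's development self-contained, and your remark on the equality case is a useful supplement, since it shows the uniform vector $p^*=(1/2,\dots,1/2)$ is not merely \emph{a} maximizer of the entropy but the \emph{unique} one --- a fact the paper's Proposition 1, as stated, does not claim.
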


\begin{proposition}  The  entropy function associated with $\Theta_0$ has a maximum in
the probability vector $p^*=(1/2,\dots,1/2).$ That is,  $H(p)\le H(p^*)$ for
all $p.$

\end{proposition}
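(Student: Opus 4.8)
The plan is to recognize the quantity $H(p)$ as the Shannon entropy of the random vector $B=(B_1,\dots,B_q)$ and then reduce the maximization to the one-dimensional binary case. Reading the definition literally, the sum runs over all $b\in\{0,1\}^q$ and each summand uses $P_p(B=b)=\prod_{c=1}^q p_c^{b_c}(1-p_c)^{1-b_c}$; hence $H(p)$ is exactly the entropy of $B$, the independent Bernoulli vector driving the model specification. I would first make this identification explicit, so that the object to be bounded is $H(B)=-\sum_{b}P_p(B=b)\log P_p(B=b)$.

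The first route uses the stated Lemma. The support of $B$ contains at most $2^q$ points (exactly $2^q$ when every $p_c\in(0,1)$, and fewer whenever some $p_c\in\{0,1\}$). Applying the Lemma with $\ell=2^q$ gives $H(p)\le\log 2^q=q\log 2$ for every $p$. It remains to check that this bound is attained at $p^*$: under $p^*=(1/2,\dots,1/2)$ each of the $2^q$ vectors $b$ receives equal probability $(1/2)^q$, so a direct computation yields $H(p^*)=-2^q\,(1/2)^q\log\!\big((1/2)^q\big)=q\log 2$. Combining the two facts, $H(p)\le q\log 2=H(p^*)$ for all $p$, which is the claim.

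As a remark I would also record the additivity route. Because $B_1,\dots,B_q$ are independent, entropy is additive, so $H(p)=\sum_{c=1}^q h(p_c)$ with $h(t)=-t\log t-(1-t)\log(1-t)$ the binary entropy. Each summand depends on a single coordinate, and $h$ is strictly concave with $h'(t)=\log((1-t)/t)$ vanishing only at $t=1/2$; thus each $h(p_c)$ is maximized at $p_c=1/2$, and therefore so is the sum, simultaneously in all coordinates. This re-proves $H(p)\le H(p^*)$ and additionally pins down uniqueness of the maximizer.

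The step that needs care is the identification in the opening paragraph: the Lemma bounds the entropy of a variable by the logarithm of the number of values it takes with positive probability, with equality for the uniform distribution on those values. One must therefore be sure that it is $B$ (equivalently, the scenario index $b$) rather than the collapsed variable $\Theta_0$ --- whose scores $\theta_0^b$ may coincide for different $b$ --- that appears in the formula. Since the definition sums over $b$ with weights $P_p(B=b)$, the support size is cleanly $2^q$ and the uniform distribution is realized precisely at $p^*$, so no coincidence among the $\theta_0^b$ can spoil either the bound or its attainment. This is the only place where the looseness between $H(\Theta_0)$ and $H(B)$ could cause trouble, and it resolves in favour of the clean statement.
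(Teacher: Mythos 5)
Your main argument is exactly the paper's proof: invoke the stated Lemma with $\ell=2^q$ and verify attainment by computing $H(p^*)=-2^q\cdot\frac{1}{2^q}\log\frac{1}{2^q}=\log 2^q$, so in that respect the proposal matches the paper's route. Two of your additions go beyond it, and both are worth noting. First, your closing paragraph addresses a point the paper glosses over: the paper justifies $\ell=2^q$ by saying this is ``the number of possible realizations of $\Theta_0$,'' which is false whenever two specifications $b\neq b'$ yield $\theta_0^b=\theta_0^{b'}$; your observation that the displayed formula for $H(p)$ is literally the entropy of the scenario vector $B$ (whose support is cleanly $2^q$ when every $p_c\in(0,1)$), not of the collapsed variable $\Theta_0$, is the correct way to make the Lemma applicable without any assumption on the scores, and it is more careful than the paper's own wording. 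Second, your additivity remark --- $H(p)=\sum_{c=1}^q h(p_c)$ with $h$ the binary entropy, maximized coordinatewise at $p_c=1/2$ by strict concavity --- is a genuinely different and self-contained argument that does not need the Lemma at all and additionally yields uniqueness of the maximizer, which neither the paper's proof nor your first route establishes. Both routes are correct; the only minor caveat is that for the equality $H(p^*)=\log 2^q$ via the Lemma one should cite the Lemma's equality case (uniform distribution) or, as you do, just compute it directly.
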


\begin{proof} Applying Lemma 1, it is sufficient to prove that
 $H(p^*)=\log 2^q$, because  $ 2^{q}$ is the number of possible
realizations of variable $\Theta_0$ (that is, the number of
scenarios determined by all the possible selections of
inputs/outputs).

Since $ P_{p^*}(\Theta_0=\theta_0^{b})  = \displaystyle  \prod_{c=1}^{b} \left (\frac{1}{2}
\right )^{b_c} \left(\frac{1}{2} \right
)^{(1-b_c)}=\frac{1}{2^b}$, then \\ $
 \displaystyle H(p^*) = -\sum_{b\in \{0,1\}^{q}} \frac{1}{2^{q}} \log (\frac{1}{2^{q}}) =2^{q} \frac{1}{2^{q}}\log (2^{q}) =  \log
(2^{q}).$    
\end{proof}

\begin{corollary} The maximum entropy expected efficiency score is
\begin{align}
\displaystyle E^e(\Theta_0) =   \frac{1}{2^{q}}  \sum_{b\in \{0,1\}^{q}}  \theta_0^b
\label{esperanza2}
\end{align}
\end{corollary}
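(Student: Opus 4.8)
The plan is to recognize that the maximum entropy expected efficiency score is nothing more than the general expected efficiency score $E_p(\Theta_0)$ from (\ref{esperanza1.1}), evaluated at the particular probability vector that maximizes the entropy. The Proposition has already identified that maximizer as $p^*=(1/2,\dots,1/2)$, so the entire task reduces to substituting $p=p^*$ into (\ref{esperanza1.1}) and simplifying the resulting weights.

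First I would write $E^e(\Theta_0)=E_{p^*}(\Theta_0)$, invoking the Proposition to justify that $p^*$ is the entropy-maximizing choice. Then I would substitute $p_c=1/2$ for every $c=1,\dots,q$ into the weight $\prod_{c=1}^{q} p_c^{b_c}(1-p_c)^{(1-b_c)}$. Each factor becomes $(1/2)^{b_c}(1/2)^{(1-b_c)}=1/2$ irrespective of whether $b_c$ equals $0$ or $1$, so the product over the $q$ coordinates collapses to $(1/2)^q=1/2^{q}$. The key point is that this constant does not depend on $b$; this is exactly the computation already carried out inside the proof of the Proposition, so it may simply be quoted.

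Since the weight equals the constant $1/2^{q}$ for every $b\in\{0,1\}^{q}$, I would pull it out of the sum in (\ref{esperanza1.1}), leaving $\frac{1}{2^{q}}\sum_{b\in\{0,1\}^{q}}\theta_0^b$, which is precisely the claimed formula (\ref{esperanza2}).

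There is no genuine obstacle here: the statement is an immediate corollary of the Proposition together with the definition (\ref{esperanza1.1}). The only point deserving care is conceptual rather than computational, namely being explicit that the ``maximum entropy expected score'' is \emph{defined} as the expected score under the least-informative (entropy-maximizing) distribution, so that applying the Proposition to replace $p$ by $p^*$ is legitimate. Once that interpretation is fixed, the uniform weighting $1/2^{q}$, and hence the plain average of $\theta_0^b$ over all $2^{q}$ specifications, follows automatically.
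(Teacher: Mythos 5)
Your proposal is correct and matches the paper's own proof, which likewise just evaluates (\ref{esperanza1.1}) at the entropy-maximizing vector $p^*=(1/2,\dots,1/2)$ identified in the Proposition. Your explicit simplification of the weights to the constant $1/2^{q}$ merely spells out the computation the paper leaves implicit.
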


\begin{proof}
The score (\ref{esperanza2}) is simply the result of calculating (\ref{esperanza1.1}) with $p=(1/2,\dots,1/2)$.
\end{proof}

This corollary shows that the average of efficiency scores across all the scenarios resulting from all the specifications of model (\ref{CRS}) is the one associated with the specification of $p$ that maximizes the entropy.

\subsection{A Bayesian approach}

In this subsection we develop a Bayesian approach as an alternative for the specification of $p$ when  it is unknown. This means that the probabilities of selection of candidate variables are assumed to be random variables in $[0,1]$. Denote by $\mathrm{P}=(\mathrm{P}_1,\dots,\mathrm{P}_q)$ the random vector consisting of the independent random variables associated with the probability of selection of each of the candidates.
Let $f$ be the joint probability density function of $\mathrm{P}$, which can be expressed as $f(\mathrm{p})=\prod_{c=1}^q f_c(\mathrm{p}_c), \mathrm{p}=(\mathrm{p}_1,\dots,\mathrm{p}_q)\in [0,1]^q$, $f_c$ being the probability density function of $\mathrm{P}_c$, $c=1,\dots,q$.

We need to introduce the following two elements for the subsequent developments

\begin{definition} The unconditional probability function of $\Theta_0$ is defined as
\begin{align}
\displaystyle P(\Theta_0=\theta_0^b) =  \int_{\mathrm{p} \in [0,1]^q}  P(\Theta_0=\theta_0^b\, | \, \mathrm{P}=\mathrm{p}) \, f(\mathrm{p}
) \,  \text{d}\mathrm{p}
\label{uncondprobablity}
\end{align}
\end{definition}

Alternatively, the unconditional probability function of
$\Theta_0$ can be reexpressed as

\begin{align}
\displaystyle P(\Theta_0=\theta_0^b) =  E\left [
P(\Theta_0=\theta_0^{b}\, | \, \mathrm{P}) \right ]
\label{uncondprobablity2}
\end{align}

That is, $P(\Theta_0=\theta_0^b)$ can be seen as the expected conditional probability to $\mathrm{p}$.

\begin{definition} The unconditional expected efficiency score of $\Theta_0$ is defined as
\begin{align}
\displaystyle E(\Theta_0) =  \sum_{b\in \{0,1\}^{q}}  \theta_0^{b} \, P(\Theta_0=\theta_0^{b})
\label{uncondeffscorecomoespincond}
\end{align}
\end{definition}

The following two propositions hold

\begin{proposition}
$E(\Theta_0) =E\left [ E(\Theta_0\, | \,\mathrm{P}) \right ]  $
\label{uncondeffscorecomoespespcond}
\end{proposition}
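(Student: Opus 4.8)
The plan is to prove the identity by unfolding the definition of the unconditional expected score (Definition 4.2) and recognizing it as an instance of the law of total expectation after a sum--integral interchange. First I would write the conditional expected score given a realization $\mathrm{P}=\mathrm{p}$ explicitly as the finite sum $E(\Theta_0\,|\,\mathrm{P}=\mathrm{p})=\sum_{b\in\{0,1\}^q}\theta_0^b\,P(\Theta_0=\theta_0^b\,|\,\mathrm{P}=\mathrm{p})$, since, conditional on $\mathrm{P}=\mathrm{p}$, the variable $\Theta_0$ is discrete with exactly the stated conditional probabilities. Then $E\left[E(\Theta_0\,|\,\mathrm{P})\right]$ is, by definition of expectation against the density $f$, the integral over $[0,1]^q$ of this finite sum with respect to $f(\mathrm{p})\,\mathrm{d}\mathrm{p}$.

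The central step is to interchange the finite sum over $b$ with the integral over $\mathrm{p}$. Because the index set $\{0,1\}^q$ is finite (only $2^q$ terms) and each $\theta_0^b$ is a fixed scalar independent of $\mathrm{p}$, the interchange is immediate and requires no dominated-convergence machinery; it is merely linearity of the integral applied finitely many times. After interchanging, the coefficient multiplying $\theta_0^b$ becomes exactly $\int_{\mathrm{p}\in[0,1]^q}P(\Theta_0=\theta_0^b\,|\,\mathrm{P}=\mathrm{p})\,f(\mathrm{p})\,\mathrm{d}\mathrm{p}$, which by Definition 4.1, equation (\ref{uncondprobablity}), is precisely the unconditional probability $P(\Theta_0=\theta_0^b)$.

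Substituting back, I would obtain $\sum_{b\in\{0,1\}^q}\theta_0^b\,P(\Theta_0=\theta_0^b)$, which is the definition of $E(\Theta_0)$ in (\ref{uncondeffscorecomoespincond}), completing the chain of equalities. The only point deserving any care --- and the one I would flag as the \emph{main obstacle}, though it is mild --- is the justification of the sum--integral swap; here it is trivial thanks to finiteness of the state space, but in a more general setting it would require an integrability hypothesis. Alternatively, the conclusion follows in one line from the tower property of conditional expectation combined with the representation (\ref{uncondprobablity2}) of $P(\Theta_0=\theta_0^b)$ as $E\left[P(\Theta_0=\theta_0^b\,|\,\mathrm{P})\right]$, which exhibits the result as a direct instance of iterated expectation.
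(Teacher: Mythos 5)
Your proposal is correct and follows essentially the same route as the paper, whose proof simply states that the result ``follows directly from (\ref{uncondeffscorecomoespincond})'': you unfold that definition, apply Definition (\ref{uncondprobablity}) (equivalently, the representation (\ref{uncondprobablity2})), and justify the finite sum--integral interchange by linearity. Your write-up merely supplies the routine details the paper leaves implicit, and your observation that the swap is trivial because the state space $\{0,1\}^q$ is finite is exactly the right remark.
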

\begin{proof}
It follows directly from (\ref{uncondeffscorecomoespincond}).
\end{proof}

\begin{proposition}
$E(\Theta_0)=
 \sum_{b\in \{0,1\}^{q}}   \ E\left [ P(\Theta_0=\theta_0^{b}\, | \, \mathrm{P}) \right ] \, \theta_0^{b} $
\label{uncondeffscorecomomediapondscores}
\end{proposition}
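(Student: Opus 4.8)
The plan is to obtain the identity by a single substitution, chaining together the definition of the unconditional expected efficiency score with the alternative expression of the unconditional probability function that was already recorded above. Concretely, I would start from Definition of $E(\Theta_0)$ in equation (\ref{uncondeffscorecomoespincond}),
\[
E(\Theta_0) = \sum_{b\in \{0,1\}^{q}}  \theta_0^{b} \, P(\Theta_0=\theta_0^{b}),
\]
which expresses the score as a weighted sum of the scenario values $\theta_0^b$ with weights given by the unconditional probabilities.

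The next step is to replace each weight $P(\Theta_0=\theta_0^{b})$ by its reexpression as an expected conditional probability. This is exactly the content of (\ref{uncondprobablity2}), namely $P(\Theta_0=\theta_0^{b}) = E\left[ P(\Theta_0=\theta_0^{b}\,|\,\mathrm{P}) \right]$, which was justified earlier as an equivalent form of the integral in (\ref{uncondprobablity}). Substituting this into the display above and keeping the finite sum over $b\in\{0,1\}^q$ intact yields
\[
E(\Theta_0) = \sum_{b\in \{0,1\}^{q}}  E\left[ P(\Theta_0=\theta_0^{b}\,|\,\mathrm{P}) \right] \theta_0^{b},
\]
which is precisely the claimed formula.

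I do not expect a genuine obstacle here, since the argument is purely a rewriting step and the sum is finite, so no interchange of summation and integration needs to be justified beyond what (\ref{uncondprobablity2}) already grants. The only point that deserves a moment of care is to confirm that the $\theta_0^b$ are fixed constants (the DEA scores attached to each specification $b$), so that they may be factored freely through the outer expectation; this is immediate because the randomness in the Bayesian setup resides entirely in $\mathrm{P}$, and hence in the conditional probabilities $P(\Theta_0=\theta_0^{b}\,|\,\mathrm{P})$, while the $\theta_0^b$ themselves do not depend on $\mathrm{P}$. With that observation the substitution is valid and the proof concludes.
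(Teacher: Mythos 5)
Your proof is correct and follows exactly the paper's route: the paper's own proof simply states that the result ``follows directly from (\ref{uncondprobablity2})'', which is precisely the substitution of the expected conditional probability into the definition (\ref{uncondeffscorecomoespincond}) that you carry out. Your additional remark that the $\theta_0^b$ are constants independent of $\mathrm{P}$ is a harmless and correct elaboration of what the paper leaves implicit.
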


\begin{proof}
It follows directly from (\ref{uncondprobablity2}).
\end{proof}

In a Bayesian approach, the probabilities are often assumed to follow a $beta$ distribution. In that case, the unconditional expected efficiency score is the following

\begin{proposition}
If $\mathrm{P}_c$ follows a $beta$ distibution, $\beta(\alpha_c,\gamma_c)$, $c=1,\dots,q$, then
\begin{align}
\displaystyle E_{\beta}(\Theta_0) =  \sum_{b\in \{0,1\}^{q}}    \frac{ \displaystyle  \prod_{c=1}^{q}  \alpha_c^{b_c}   \prod_{c=1}^{q}  \gamma_c^{1-b_c} }{ \displaystyle \prod_{c=1}^{q} (\alpha_c+\gamma_c)} \,   \theta_0^{b}
\label{esperanza3.1}
\end{align}

\end{proposition}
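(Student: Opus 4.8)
The plan is to build directly on Proposition \ref{uncondeffscorecomomediapondscores}, which already expresses the unconditional expected score as
\[
E(\Theta_0)= \sum_{b\in \{0,1\}^{q}} E\left[ P(\Theta_0=\theta_0^{b}\,|\,\mathrm{P}) \right] \, \theta_0^{b},
\]
so that the entire task reduces to evaluating the weight $E\left[ P(\Theta_0=\theta_0^{b}\,|\,\mathrm{P}) \right]$ for each fixed $b$ under the hypothesis that each $\mathrm{P}_c\sim\beta(\alpha_c,\gamma_c)$. First I would recall that, conditionally on $\mathrm{P}=\mathrm{p}$, the selection mechanism is exactly the Bernoulli product law introduced in Section 3, so that $P(\Theta_0=\theta_0^{b}\,|\,\mathrm{P}=\mathrm{p})=\prod_{c=1}^{q} \mathrm{p}_c^{b_c}(1-\mathrm{p}_c)^{1-b_c}$.

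The central step is then to take the expectation of this product over $\mathrm{P}$. Since the components $\mathrm{P}_c$ are assumed independent, the expectation of the product factorizes into a product of expectations,
\[
E\left[\prod_{c=1}^{q} \mathrm{P}_c^{b_c}(1-\mathrm{P}_c)^{1-b_c}\right]= \prod_{c=1}^{q} E\left[ \mathrm{P}_c^{b_c}(1-\mathrm{P}_c)^{1-b_c}\right].
\]
Here I would exploit the fact that each exponent $b_c$ lies in $\{0,1\}$, so that each factor is simply $E[\mathrm{P}_c]$ when $b_c=1$ and $E[1-\mathrm{P}_c]=1-E[\mathrm{P}_c]$ when $b_c=0$; in particular no moment of order higher than one is ever required, and the beta density need not be integrated explicitly.

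It then only remains to insert the mean of the beta law. Using $E[\mathrm{P}_c]=\alpha_c/(\alpha_c+\gamma_c)$, each factor equals $\alpha_c^{b_c}\gamma_c^{1-b_c}/(\alpha_c+\gamma_c)$ uniformly in the two cases $b_c=0,1$, and multiplying over $c=1,\dots,q$ yields the weight
\[
\frac{\displaystyle \prod_{c=1}^{q}\alpha_c^{b_c}\prod_{c=1}^{q}\gamma_c^{1-b_c}}{\displaystyle \prod_{c=1}^{q}(\alpha_c+\gamma_c)}.
\]
Substituting this back into the displayed identity of Proposition \ref{uncondeffscorecomomediapondscores} gives the claimed formula for $E_{\beta}(\Theta_0)$. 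There is no genuinely hard step: the only points that warrant care are justifying the factorization of the expectation through the independence of the components of $\mathrm{P}$, and verifying that the single compact expression $\alpha_c^{b_c}\gamma_c^{1-b_c}/(\alpha_c+\gamma_c)$ reproduces both the $b_c=1$ and $b_c=0$ cases, since it is precisely this uniformity that permits the per-variable factors to be collected into the stated product.
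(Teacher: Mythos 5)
Your proposal is correct, and it reaches the weight $\prod_{c=1}^{q}\alpha_c^{b_c}\gamma_c^{1-b_c}\big/\prod_{c=1}^{q}(\alpha_c+\gamma_c)$ by a genuinely shorter route than the paper. The paper starts from the defining integral (\ref{uncondprobablity}) and computes $P(\Theta_0=\theta_0^b)$ explicitly: it writes out the $q$-fold integral of the Bernoulli product against the beta densities, peels off one coordinate at a time, and evaluates each univariate integral $\int_0^1 \mathrm{p}^{\alpha_c+b_c-1}(1-\mathrm{p})^{\gamma_c-b_c}\,\frac{\Gamma(\alpha_c+\gamma_c)}{\Gamma(\alpha_c)\Gamma(\gamma_c)}\,\text{d}\mathrm{p}$ via beta-function and gamma identities, obtaining the factor $\alpha_c^{b_c}\gamma_c^{1-b_c}/(\alpha_c+\gamma_c)$ for each $c$. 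You instead start from Proposition \ref{uncondeffscorecomomediapondscores}, factorize $E\left[\prod_{c=1}^{q}\mathrm{P}_c^{b_c}(1-\mathrm{P}_c)^{1-b_c}\right]$ by independence, and then observe that, because each exponent $b_c$ lies in $\{0,1\}$, every factor is either $E[\mathrm{P}_c]$ or $1-E[\mathrm{P}_c]$, so the only distributional input needed is the beta mean $\alpha_c/(\alpha_c+\gamma_c)$ --- no explicit integration at all. What your argument buys is brevity and transparency: it makes clear that the result depends only on first moments of the $\mathrm{P}_c$ (so the same formula holds for \emph{any} independent $[0,1]$-valued priors with these means, not just beta priors). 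What the paper's computation buys is self-containedness: it derives the per-coordinate factor from scratch rather than quoting the beta mean, and it produces the unconditional probability law (\ref{uncondprobablity}) itself as an intermediate object, not just the expected score. Both arguments rest on the same two pillars, the conditional Bernoulli product form and independence across coordinates, so the difference is in the key computational step rather than in the overall structure.
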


\begin{proof}
 \begin{align}
 \displaystyle P(\Theta_0=\theta_0^b) = & \int_{\mathrm{p} \in [0,1]^q}  P(\Theta_0=\theta_0^b\, | \, \mathrm{P}=\mathrm{p})  \, f(\mathrm{p}) \,\text{d}\mathrm{p} = \nonumber\\
 &=  \int_{0}^{1} \hspace{-8pt} \cdots \hspace{-5pt} \int_{0}^{1}  \prod_{c=1}^{q}  \mathrm{p}_t^{b_c}  \, (1-\mathrm{p}_t)^{(1-b_c)}  \,   \frac{\Gamma(\alpha_c+\gamma_c)}{\Gamma(\alpha_c) \Gamma(\gamma_c)}  \, \mathrm{p}_t^{\alpha_c-1}  \, (1-\mathrm{p}_t)^{\gamma_c-1} \,\text{d}\mathrm{p}_1 \cdots   \text{d}\mathrm{p}_{q} = \nonumber\\
 &=  \int_{0}^{1} \hspace{-8pt} \cdots \hspace{-5pt} \int_{0}^{1}  \prod_{c=1}^{q}  \mathrm{p}_t^{\alpha_c+b_c-1} (1-\mathrm{p}_t)^{(\gamma_c-b_c)}   \frac{\Gamma(\alpha_c+\gamma_c)}{\Gamma(\alpha_c) \Gamma(\gamma_c)}  \,\text{d}\mathrm{p}_1 \cdots   \text{d}\mathrm{p}_{q} = \nonumber\\
 &=  \int_{0}^{1} \hspace{-8pt} \cdots \int_{0}^{1} \, \prod_{c=2}^{q}  \mathrm{p}_t^{\alpha_c+b_c-1}  \, (1-\mathrm{p}_t)^{(\gamma_c-b_c)} \, \frac{\Gamma(\alpha_c+\gamma_c)}{\Gamma(\alpha_c) \Gamma(\gamma_c)}  \cdot \nonumber \\
 &  \qquad \qquad \qquad  \cdot  \left(  \int_{0}^{1}  \,   \mathrm{p}_1^{\alpha_1+b_1-1}  \, (1-\mathrm{p}_1)^{(\gamma_1-b_1)}  \,   \frac{\Gamma(a_1+b_1)}{\Gamma(a_1) \Gamma(b_1)}  \,\text{d}p_1 \right )  \text{d}\mathrm{p}_2  \cdots   \text{d}\mathrm{p}_q = \nonumber\\
  &=\displaystyle   \frac{\alpha_1^{b_1}  \, \gamma_1^{1-b_1} }{\alpha_1+\gamma_1}\int_{0}^{1} \hspace{-8pt} \cdots \int_{0}^{1} \, \prod_{c=2}^{q}  \mathrm{p}_t^{\alpha_c+b_c-1}  \, (1-\mathrm{p}_t)^{(\gamma_c-b_c)} \, \frac{\Gamma(\alpha_c+\gamma_c)}{\Gamma(\alpha_c) \Gamma(\gamma_c)}    \,  \text{d}\mathrm{p}_2  \cdots   \text{d}\mathrm{p}_{q} = \nonumber\\
  & =   \frac{ \displaystyle  \prod_{c=1}^{q}  \alpha_c^{b_c}  \,   \prod_{c=1}^{q}  \gamma_c^{1-b_c} }{ \displaystyle \prod_{c=1}^{q} (\alpha_c+\gamma_c)} \nonumber \\
& \nonumber 
 \end{align}
\begin{align}
E_{\beta}(\Theta_0) &=\sum_{b\in \{0,1\}^{q}}   E\left [ P(\Theta_0=\theta_0^{b}\, | \, \mathrm{P}) \right ]  \,   \theta_0^{b}  =   \sum_{b\in \{0,1\}^{q}}    \frac{ \displaystyle  \prod_{c=1}^{q}  \alpha_c^{b_c}   \prod_{c=1}^{q}  \gamma_c^{1-b_c} }{ \displaystyle \prod_{c=1}^{q} (\alpha_c+\gamma_c)} \,   \theta_0^{b} \nonumber
\end{align}

\end{proof}

The uniform distribution is a particular case of the beta.
Specifically, $U[0,1]=\beta(1,1)$. 

\begin{corollary}
If $\mathrm{P}$ is a random vector consisting of independent random variables distributed $\mathrm{U}[0,1]$, then
  \begin{align}
E_{\UU}(\Theta_0) = \frac{1}{2^{q}}  \sum_{b\in \{0,1\}^{q}} \theta_0^b \label{esperanza3.2}
\end{align}
\end{corollary}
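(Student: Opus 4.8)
The plan is to derive this corollary as an immediate specialization of the preceding Proposition on the beta distribution. The key observation, stated just above the corollary, is that the uniform distribution on $[0,1]$ coincides with the beta distribution $\beta(1,1)$. Hence I would invoke formula (\ref{esperanza3.1}) with the parameter choice $\alpha_c=\gamma_c=1$ for every $c=1,\dots,q$, and then simplify the resulting coefficient that multiplies $\theta_0^b$.

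First I would substitute $\alpha_c=\gamma_c=1$ into the weight appearing in (\ref{esperanza3.1}). The two numerator factors become $\prod_{c=1}^{q}\alpha_c^{b_c}=\prod_{c=1}^{q}1^{b_c}=1$ and $\prod_{c=1}^{q}\gamma_c^{1-b_c}=\prod_{c=1}^{q}1^{1-b_c}=1$, and this holds regardless of the particular vector $b$. The denominator becomes $\prod_{c=1}^{q}(\alpha_c+\gamma_c)=\prod_{c=1}^{q}2=2^{q}$. Thus every weight collapses to the single constant $1/2^{q}$, independent of $b$.

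Substituting this constant weight back into the sum then yields
\[
E_{\UU}(\Theta_0)=\sum_{b\in\{0,1\}^{q}}\frac{1}{2^{q}}\,\theta_0^{b}=\frac{1}{2^{q}}\sum_{b\in\{0,1\}^{q}}\theta_0^{b},
\]
which is exactly (\ref{esperanza3.2}). There is no genuine technical obstacle here: the argument is a one-line parameter substitution into an already established formula. The only point worth flagging is conceptual rather than computational. This uniform-prior unconditional score coincides with the maximum-entropy expected score (\ref{esperanza2}) obtained earlier, so I would close by remarking that the two a priori distinct specifications of $p$ --- the maximum-entropy principle and a flat Bayesian prior --- lead to the very same robust efficiency score, namely the plain average of the DEA scores $\theta_0^{b}$ across all $2^{q}$ input/output specifications.
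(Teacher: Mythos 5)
Your proof is correct and follows exactly the route the paper intends: the paper states just before this corollary that $U[0,1]=\beta(1,1)$, so the result is the immediate specialization of the beta-distribution proposition (\ref{esperanza3.1}) with $\alpha_c=\gamma_c=1$, which collapses every weight to $1/2^q$ precisely as you compute. Your closing remark about the coincidence with the maximum-entropy score (\ref{esperanza2}) matches the observation the paper itself makes immediately after the corollary.
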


Note that in this case the \emph{unconditional expected efficiency score} is again the average of efficiency scores across all the scenarios resulting from all the specifications of model (\ref{CRS}), like the \emph {maximum entropy expected efficiency score}.

It can also be considered the case in which we do not distinguish
between the probabilities of selection associated with all the
candidate variables. If we assume
$\mathrm{P}_1=\dots=\mathrm{P}_q=\overline{\mathrm{P}}$ to follow
a distribution $U[0,1]$, and we denote by
$E_{\overline\UU}(\Theta_0^{})$ the unconditional expected
efficiency score in that case, then
(\ref{uncondeffscorecomoespespcond}) and
(\ref{uncondeffscorecomomediapondscores}) become in the unconditional expected efficiency score presented  in the next results:

\begin{corollary}If $\mathrm{P}_1=\dots=\mathrm{P}_q=\overline{\mathrm{P}}$ follows an uniform distribution $U[0,1]$, then the unconditional expected score of $\Theta_0$, $E_{\overline\UU}(\Theta_0^{})$, is the expected of the conditional expected score $E(\Theta_0^{}|\mathrm{P})$.
\begin{align}
\displaystyle  E_{\overline\UU}(\Theta_0^{})= \int_{0}^{1} E_{\overline{\mathrm{p}}}(\Theta_0 )  \, \text{d}\overline{\mathrm{p}}   \label{col4}
  \end{align}
\end{corollary}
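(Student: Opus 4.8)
The plan is to obtain (\ref{col4}) from the general law-of-total-expectation characterization of the unconditional expected score, namely $E(\Theta_0)=E\left[E(\Theta_0\,|\,\mathrm{P})\right]$ (Proposition \ref{uncondeffscorecomoespespcond}), and then to exploit the special structure of the degenerate regime $\mathrm{P}_1=\dots=\mathrm{P}_q=\overline{\mathrm{P}}$, in which the whole random vector $\mathrm{P}$ is a deterministic function of the single scalar random variable $\overline{\mathrm{P}}\sim U[0,1]$. This collapses the outer expectation to a one–dimensional integral against the law of $\overline{\mathrm{P}}$.

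First I would pin down the conditional object. Given $\mathrm{P}=\mathrm{p}$, the inclusion of the candidate variables is governed by independent Bernoulli draws with success probabilities $\mathrm{p}=(p_1,\dots,p_q)$, so the conditional expected score is exactly the score (\ref{esperanza1.1}),
\[
E(\Theta_0\,|\,\mathrm{P}=\mathrm{p})=E_p(\Theta_0)=\sum_{b\in\{0,1\}^q}\left(\prod_{c=1}^q p_c^{b_c}(1-p_c)^{1-b_c}\right)\theta_0^b.
\]
Specializing to a diagonal point $\mathrm{p}=(\overline{\mathrm{p}},\dots,\overline{\mathrm{p}})$, every factor carries the same success probability, and hence $E(\Theta_0\,|\,\overline{\mathrm{P}}=\overline{\mathrm{p}})=E_{\overline{\mathrm{p}}}(\Theta_0)$, which is precisely the integrand appearing in (\ref{col4}). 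Since $\mathrm{P}=(\overline{\mathrm{P}},\dots,\overline{\mathrm{P}})$ is a measurable function of $\overline{\mathrm{P}}$, conditioning on $\mathrm{P}$ is the same as conditioning on $\overline{\mathrm{P}}$, so $E(\Theta_0\,|\,\mathrm{P})=E_{\overline{\mathrm{P}}}(\Theta_0)$ as a random variable. Taking the outer expectation and using that $\overline{\mathrm{P}}\sim U[0,1]$ has density equal to $1$ on $[0,1]$ yields
\[
E_{\overline\UU}(\Theta_0)=E\bigl[E(\Theta_0\,|\,\mathrm{P})\bigr]=E\bigl[E_{\overline{\mathrm{P}}}(\Theta_0)\bigr]=\int_{0}^{1}E_{\overline{\mathrm{p}}}(\Theta_0)\,\text{d}\overline{\mathrm{p}},
\]
which is exactly (\ref{col4}).

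The step that needs the most care — and the reason the result does \emph{not} follow by simply substituting into the earlier integral representation (\ref{uncondprobablity}) — is that the components of $\mathrm{P}$ are no longer independent here: they are perfectly correlated, all equal to $\overline{\mathrm{P}}$. Consequently the joint law of $\mathrm{P}$ is concentrated on the diagonal $\{(\overline{\mathrm{p}},\dots,\overline{\mathrm{p}}):\overline{\mathrm{p}}\in[0,1]\}$, a set of Lebesgue measure zero in $[0,1]^q$, so it admits no product density $f(\mathrm{p})=\prod_c f_c(\mathrm{p}_c)$ and the $q$–fold integral (\ref{uncondprobablity}) is unavailable. The careful move is therefore to remain at the level of the conditional expectation $E\left[E(\Theta_0\,|\,\mathrm{P})\right]$, which is well defined for an arbitrary joint distribution of $\mathrm{P}$, and to transport the degeneracy through the change of variables $\mathrm{P}\mapsto\overline{\mathrm{P}}$ so that the outer expectation reduces to the single integral with respect to the uniform law on $[0,1]$. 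Everything else is a direct substitution.
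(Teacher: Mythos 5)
Your proposal is correct and takes essentially the same approach as the paper: the paper's entire proof of this corollary is the pointer ``See Proposition~\ref{uncondeffscorecomoespespcond}'', i.e.\ the identity $E(\Theta_0)=E\left[E(\Theta_0\,|\,\mathrm{P})\right]$, which is exactly the identity you invoke and then specialize to the diagonal, where $E(\Theta_0\,|\,\mathrm{P})=E_{\overline{\mathrm{P}}}(\Theta_0)$ and the outer expectation reduces to $\int_0^1 E_{\overline{\mathrm{p}}}(\Theta_0)\,\text{d}\overline{\mathrm{p}}$. Your added justification --- that the perfectly correlated law of $\mathrm{P}$ is concentrated on the diagonal of $[0,1]^q$, admits no product density, and therefore forces the argument through the conditional-expectation form rather than (\ref{uncondprobablity}) --- is a correct elaboration of a point the paper leaves implicit.
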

\begin{proof}{See Proposition \ref{uncondeffscorecomoespespcond}.}
\end{proof}

\begin{corollary} The unconditional expected score of $\Theta_0
$, $E_{\overline\UU}(\Theta_0^{})$, is given by :
\begin{align}
\displaystyle  E_{\overline\UU}(\Theta_0^{}) = \frac{1}{q+1} \sum_{b\in \{0,1\}^{q}}  \frac{1}{ \displaystyle \binom{q}{\sum_c b_c }} \,\theta_0^b =
 \frac{1}{q+1} \sum_{i=0}^{q} \left(\sum_{\substack{b\in \{0,1\}^{q} \\ \sum_c b_c= i}}  \frac{1}{ \displaystyle \binom{q}{i }} \,\theta_0^b \right )
 \label{esperanza3.4}
  \end{align}
\end{corollary}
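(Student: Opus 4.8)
The plan is to start from the integral representation established in the previous corollary (\ref{col4}), namely $E_{\overline\UU}(\Theta_0) = \int_{0}^{1} E_{\overline{\mathrm{p}}}(\Theta_0)\,\text{d}\overline{\mathrm{p}}$, and reduce the whole computation to a single elementary integral. First I would specialize the expected score formula (\ref{esperanza1.1}) to the case $p_1=\dots=p_q=\overline{\mathrm{p}}$. Each factor $p_c^{b_c}(1-p_c)^{1-b_c}$ then becomes $\overline{\mathrm{p}}^{\,b_c}(1-\overline{\mathrm{p}})^{1-b_c}$, so the product over $c$ collapses to $\overline{\mathrm{p}}^{\,\sum_c b_c}(1-\overline{\mathrm{p}})^{\,q-\sum_c b_c}$, a quantity that depends on $b$ only through $i:=\sum_c b_c$. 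This gives $E_{\overline{\mathrm{p}}}(\Theta_0)=\sum_{b\in\{0,1\}^q}\overline{\mathrm{p}}^{\,\sum_c b_c}(1-\overline{\mathrm{p}})^{\,q-\sum_c b_c}\,\theta_0^b$.

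Next I would interchange the (finite) sum with the integral, which is legitimate since each summand is continuous on $[0,1]$, obtaining $E_{\overline\UU}(\Theta_0)=\sum_{b\in\{0,1\}^q}\theta_0^b\int_{0}^{1}\overline{\mathrm{p}}^{\,i}(1-\overline{\mathrm{p}})^{\,q-i}\,\text{d}\overline{\mathrm{p}}$ with $i=\sum_c b_c$. The only genuine computation is this one integral, which is the Beta integral $B(i+1,q-i+1)=\frac{i!\,(q-i)!}{(q+1)!}$. Rewriting $\frac{i!\,(q-i)!}{(q+1)!}=\frac{1}{q+1}\cdot\frac{i!\,(q-i)!}{q!}=\frac{1}{q+1}\binom{q}{i}^{-1}$ produces exactly the weight $\frac{1}{(q+1)\binom{q}{\sum_c b_c}}$ appearing in the statement, establishing the first equality.

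Finally, the second equality is a mere regrouping of terms: for each fixed $i\in\{0,\dots,q\}$ the weight $\frac{1}{(q+1)\binom{q}{i}}$ is constant across the $\binom{q}{i}$ vectors $b$ with $\sum_c b_c=i$, so collecting those vectors yields the double-sum form. The main obstacle, if one can call it that, is simply recognizing the integral as a Beta function and converting the resulting factorial expression into the reciprocal binomial coefficient; beyond that, the argument is a direct specialization of (\ref{esperanza1.1}) followed by a finite-sum/integral exchange, and it requires no hypotheses other than the common-uniform assumption and the integral identity (\ref{col4}).
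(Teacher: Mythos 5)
Your proof is correct and follows essentially the same route as the paper: the paper's proof is a one-line citation of Proposition \ref{uncondeffscorecomomediapondscores}, whose application to the common-uniform case requires exactly the Beta-integral evaluation $\int_0^1 \overline{\mathrm{p}}^{\,i}(1-\overline{\mathrm{p}})^{\,q-i}\,\text{d}\overline{\mathrm{p}}=\frac{i!\,(q-i)!}{(q+1)!}=\frac{1}{(q+1)\binom{q}{i}}$ that you carry out explicitly. Starting instead from (\ref{col4}) and interchanging the finite sum with the integral is the same computation in a different order, so your argument supplies the details the paper leaves implicit rather than a genuinely different method.
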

\begin{proof}{See Proposition \ref{uncondeffscorecomomediapondscores}.}
\end{proof}

Note that in this case the weight  attached to  each score
$\theta_0^{b}$ depends on the $1$-norm of $b$.  It exists a relationship between the probability
$\mathrm{P}_1=\dots=\mathrm{P}_q=\overline{\mathrm{P}}$ and the
probability of each subset of $\CC$  when the value of the the
$1$-norm of $b$  is fixed. The unconditional efficiency expected
score $E_{\overline \UU}(\Theta_0^{})$ is the weighted average of efficiency scores  where the weight
attached to each of them  is given by the number of all  specifications with the
same number of inputs/outputs.

\subsection{Summing up}

To end this section, we summarize the results obtained.
We have proposed a probabilistic/combinatorial  approach that provides DEA efficiency scores which are robust against the selection of inputs/outputs to be included in the model. This
approach considers all the scenarios associated with the possible specifications of the DEA model together with their probabilities. The key is obviously
 in the specification of such probabilities. If they can be set by using expert opinions, then the DEA efficiency scores will be obtained as
 in (\ref{esperanza1.1}). Otherwise, if information from experts is not available, then an interesting
 choice is $\frac{1}{2^{q}}  \sum_{b\in \{0,1\}^{q}} \theta_0^b$, that is, the average of DEA efficiency scores across all the scenarios.
 The developments have shown that this score is that which results both when $p$ is set by maximizing the entropy and when a Bayesian approach
  is adopted by assuming the probabilities of selection of candidates as independent random variables distributed uniform in $U[0,1]$. In particular,
  the entropy is maximized when the probabilities of selection of candidates are all equal to  one half. Having a look at (\ref{col4}),
  we realize that $E_{\overline\UU}(\Theta_0^{})$ somehow summarizes the values $E_{\overline{\mathrm{p}}}(\Theta_0 )$ across the different values
   of a common $\overline{\mathrm{p}}$ (with $\overline{p}=1/2$ among them).

\section{Algorithm}

In order to compute the   efficiency scores   we have defined  for 
each subset of inputs/outputs $b\in\{0,1\}^q$ and for each DMU, we need to  solve  $(2^{q}-1)\cdot n
$ problems, where $n$ is the number of DMUs to evaluate. To  represent the
subsets of  $\CC$, we consider that $\CC$ is ordered and  use a
list of $q$ binary numbers. If $q=4,$ the number 1001 represents
the subset with the first and the last elements of the ordered set
$\CC.$

Figure \ref{tree}  shows the exploration tree, where each node in
the tree represents one of the  $2^{q}=16$  problems, and  only 4
inputs/outputs have been considered. Each node  is enumerated
according to the order of exploration.   For example, node 12 will
be explored after node 11 and corresponds to the computation of
the score  when $b=\{0101\}$.

\begin{figure}[h]
\begin{center}
   \includegraphics{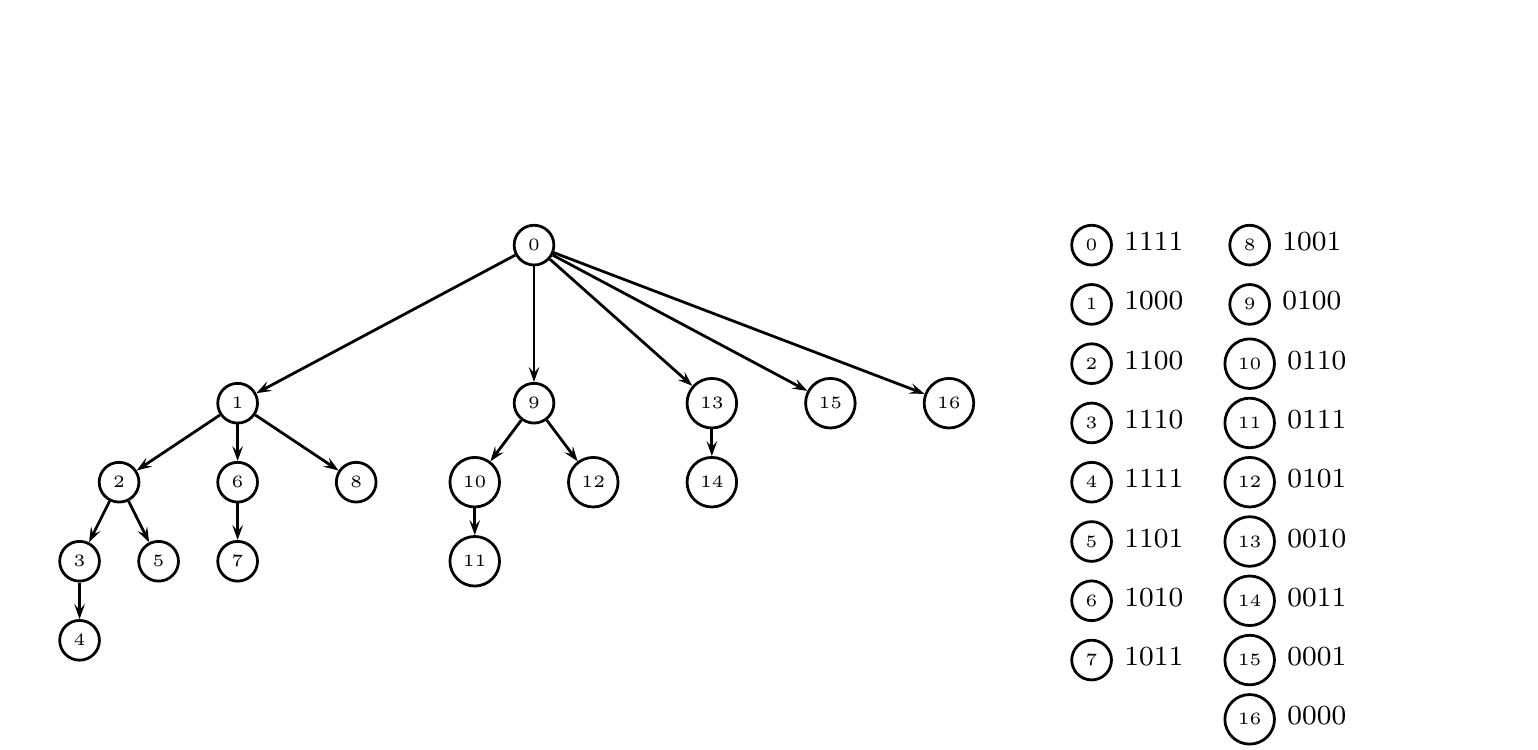}
\caption{Enumeration tree.} \label{tree}
\end{center}
\end{figure}

\begin{figure}[htb]
\centering
\fbox{
\begin{minipage}{13.cm}
\begin{minipage}{12.5cm}
\vspace{.5cm}
\footnotesize
\begin{enumerate}
\item Solve the problem (\ref{CRS}) when all the inputs/outputs
are considered. Let $\theta_0^{b=(1,\cdots,1)}$ be the solution of
the problem.
    \begin{itemize}
    \item[1.1 -] $\theta_0^{C}=\theta_0^{b=(1,\cdots,1)}$ is un upper bound of   $\theta_0^{b}$ for all $b \subset \{0,1\}^q$.
    \item[1.2 -] Get the dual variables of the problem in the optimal solution. It is  known that dual variables provide the weights of the inputs and outputs that represents the   $DMU_0$ in  the best way. i.e., dual variables measure the importance of each input/output for  $DMU_0$.
    \item[1.3 -] Order the  set $\CC$ according  the dual variables, the highest values first.
    \end{itemize}
\item Start the algorithm at node 1. Solve the problem with the
first input/output of $\CC$, i.e.,  $b=(1,0,\cdots,0)$. \item At
each node $b$ in the enumeration tree
    \begin{itemize}
    \item[3.1 -] Add the solution of the ancestor node as a basic initial solution.
    \item[3.2 -] Solve (using the dual simplex algorithm) the problem (\ref{CRS}) for the inputs/outputs in the set $b \subset \{0,1\}^q$.
    \end{itemize}

\item If the last element of $\CC$ belongs to set $b$, then,  cut
the branch in the enumeration tree, and go  back in the tree to
the first node that allows to add a new element of $\CC$ and
provide a new subset $b$ not evaluated yet. \item
    \begin{itemize}
        \item[5.1-]     If $\theta_0^{b} = \theta_0^{b=(1,\cdots,1)}$, then
        \begin{itemize}
        \item[a) ]  Cut the branch in the enumeration tree, and go  back in the tree to the first node that allows us to add a new element of $\CC$ and provide a new subset $b$ not evaluated yet.
        \item[b) ]  $\theta_0^{b'} := \theta_0^{b=(1,\cdots,1)}$  for each subset/problem $b'$, with $b \subset  b'$.
        \item[c) ] Go to step 3.
        \end{itemize}
    \item[5.2 -] else if $\theta_0^{b} \neq  \theta_0^{b=(1,\cdots,1)}$
        \begin{itemize}
        \item[a) ] Add a first element of  ordered set $\CC$ that provide a new subset $b$  not evaluated yet.
        \item[b) ] Solve the new problem to obtain $\theta_0^{b} $  and the dual variables of problem (\ref{CRS}) for the set of inputs/outputs $b$.
            \item[c) ]Go to step 3.
        \end{itemize}
    \end{itemize}

\end{enumerate}
\vspace{.3cm}
 \end{minipage}
 \end{minipage}
} \label{algorithm2} \caption{Enumeration  tree algorithm.}
\end{figure}

In order to reduce the number of problems to calculate, we propose the following enumeration algorithm for each $DMU_0$, see Figure 2. The main contributions of the algorithm are summarized in three points:

\begin{enumerate}
    \item The  set of inputs/outputs $\CC$ is sorted in such a way that the scores of the first nodes are  hopefully large. If an score equals the upper bound given by $\theta_0^{C}=\theta_0^{b=(1,\cdots,1)},$ then 
     it is not necessary
     to continue the exploration.     Step 1.3 of the algorithm.
    \item At each iteration, the solver uses the optimal solution of the ancestor node in the tree as a feasible solution.  Each solution in the exploration tree is a feasible dual solution for the next problem.
     Running the dual simplex algorithm  with start basic solution allows to reduce the computational time.  Step 3 of the algorithm.
    \item At each iteration, the resulting score is compared  with the score from set  $\CC.$ If both are equal, the exploration tree is cut. Step 5.1 of the algorithm.
\end{enumerate}

We have tested the algorithm using randomly generated instances. We
have used a uniform distribution in $[50,100]$ to generate the
inputs/outputs values. The results are shown in Table
\ref{t:tabla1}. The first  two  columns  are the number of DMUs
($N_{DMU}$), and  the number of  inputs/outputs ($|\CC|$). The number
of DMUs and inputs/outputs varies from 25 to 200, and from 5 to 20
respectively. The next block of two columns gives the related
elapsed time ($time$) in seconds and the number of solved problems
by the algorithm   ($N.$ $Prob$). The next two columns report the
reduction in time (\%$time$) and the reduction in the number of
problems (\%$N.$ $Prob$) comparing the algorithm with the full
exhaustive exploration of the tree. Values for the exhaustive
exploration  are reported in the last two columns. Note that the
number of problems to solve  is $(2^{|\CC|}-1)*N_{DMU}$
($(2^5-1)*25=775, (2^{10}-1)*25=25575,\ldots$). In general, the
use of the algorithm instead of the exhaustive enumeration
divides the time by two and avoids to solve more than half of
problems. Nevertheless, we can see in Table
\ref{t:tabla1} that computational time required by our approach is very low when the number of input/output candidates is no larger than 10. This means that, in many of the situations we usually find in practice, computational time will not be a limitation in order to yield robust DEA efficiency scores (and, consequently, in those cases, the use of the algorithm will not provide any important benefit in terms of time).

Our experiment was conducted on a PC with a 2.5 GHz dual-core Intel Core i5 processor, 8 Gb of RAM and the operating system was OS X 10.9.

\setlength{\tabcolsep}{1.6 mm}

 \begin{table}[htb]
 \centering
 \footnotesize
\begin{tabular}{llrrrrrrrrr}
\hline  \multicolumn{2}{l}{} && \multicolumn{2}{c}{$Algorithm$ } &&\multicolumn{2}{c}{$Reduction$} &&\multicolumn{2}{c}{$Total$}
\\ \cline{4-5}\cline{7-8}\cline{10-11}
 $N_{DMU}$ &$|\CC|$ && $time$ & $N. Prob$  & & $\%time$ & $\%N. Prob$ && $time$ &$N. Prob$     \\ \hline
\\ \cline{1-2}
25  &   5   &&  0.01    &   587 &&  50\%    &   76\%    &&  0.02    &   775 \\
25  &   10  &&  0.21    &   16,529  &&  60\%    &   65\%    &&  0.35    &   25,575  \\
25  &   15  &&  5.75    &   397,900 &&  52\%    &   49\%    &&  11.08   &   819,175 \\
25  &   20  &&  132.00  &   7,190,698   &&  42\%    &   27\%    &&  317.47  &   26,214,375  \\
\\ \cline{1-2}
50  &   5   &&  0.03    &   1,262   &&  75\%    &   81\%    &&  0.04    &   1,550   \\
50  &   10  &&  0.51    &   38,923  &&  55\%    &   76\%    &&  0.92    &   51,150  \\
50  &   15  &&  16.30   &   1,050,132   &&  48\%    &   64\%    &&  33.97   &   1,638,350   \\
50  &   20  &&  400.63  &   22,374,653  &&  44\%    &   43\%    &&  901.38  &   52,428,750  \\
\\ \cline{1-2}
100 &   5   &&  0.05    &   2,012   &&  63\%    &   65\%    &&  0.08    &   3,100   \\
100 &   10  &&  1.51    &   82,954  &&  55\%    &   81\%    &&  2.74    &   102,300 \\
100 &   15  &&  46.15   &   2,175,660   &&  49\%    &   66\%    &&  94.12   &   3,276,700   \\
100 &   20  &&  1,095.13    &   52,321,522  &&  44\%    &   50\%    &&  2,480.91    &   104,857,500 \\
\\ \cline{1-2}
200 &   5   &&  0.14    &   4,461   &&  67\%    &   72\%    &&  0.21    &   6,200   \\
200 &   10  &&  4.41    &   167,624 &&  53\%    &   82\%    &&  8.26    &   204,600 \\
200 &   15  &&  139.27  &   4,553,112   &&  49\%    &   69\%    &&  285.63  &   6,553,400   \\
200 &   20  &&  4,498.19    &   108,572,383 &&  49\%    &   52\%    &&  9,118.91    &   209,715,000 \\
\hline
\end{tabular}
\caption{Computational results for a big random data.}
\label{t:tabla1}
\end{table}

\newpage

\section{Combinatorial DEA Efficiency Scores: A case study}

To illustrate the approach proposed, we apply it in an example on the assessment 
of profesional  tennis players. The
Association of Tennis Professionals (ATP) provides statistics of the
game performance of players, in particular regarding the game
factors reported in Table \ref{t:tabla2}. Table \ref{t:tabla3}
records the values of those game factors corresponding to the 46 players for whom we have available data for all these factors during the 2014 season
(the data have taken from \url{http://www.atpworldtour.com/}). With
these 46 players, we  have a sample size large enough so as to avoid
problems with the dimensionality of the models used. These game
factors are considered as outputs in the DEA efficiency models used,
while we do not consider any explicit inputs, since in our analysis
there is no reference to resources consumed. We only include in the
models a single constant input equal to 1, which means that every
player is doing the best for playing his game. Thus, we will be
actually evaluating the effectiveness of player game performance
instead of efficiency. It should also be noted that, in the case of
having one constant input, the optimal solutions of (\ref{CRS})
satisfy the condition $\sum_{j=1}^n \lambda_j =1$. Therefore, in
these special circumstances, the specification of returns to scale
is not particularly relevant (see \cite{Lovell1995,Lovell1999} for
details and discussions).

\begin{table}[h]
\centering
\small
\begin{tabular}{ll}\hline
\multicolumn{2}{l}{OUTPUTS} \\
\hline
$y_1$ &=   percentage of 1st serve\\
$y_2$&=  percentage of 1st serve points won\\
$y_3$&=  percentage of 2nd serve points won\\
$y_4$&= percentage of service games won\\
$y_5$& = percentage of break points saved\\
$y_6$&= percentage of points won returning 1st serve\\
$y_7$&= percentage of points won returning 2nd serve\\
$y_8$&= percentage of break points converted\\
$y_9$& = percentage of return games won\\ \hline
\end{tabular}
\caption{Output summary.}
\label{t:tabla2}
\end{table}


\begin{table}[htb]
\centering
\scriptsize
\begin{tabular}{rlrrrrrrrrr}\hline
ATP && \multicolumn{9}{c}{OUTPUTS} \\
\cline{3-11}
Ranking &Player &   $y_1$ &     $y_2$ & $y_3$ & $y_4$ & $y_5$ & $y_6$ & $y_7$ & $y_8$ & $y_9$ \\ \hline  \hline
1   &   Novak Djokovic      &   67  &   75  &   56  &   88  &   63  &   33  &   58  &   45  &   33  \\
2   &   Roger Federer   &   64  &   79  &   58  &   91  &   71  &   32  &   51  &   39  &   26  \\
3   &   Rafael Nadal        &   70  &   72  &   55  &   85  &   66  &   35  &   56  &   48  &   35  \\
4   &   Stan Wawrinka       &   55  &   79  &   54  &   86  &   61  &   29  &   50  &   42  &   22  \\
5   &   Kei Nishikori       &   60  &   73  &   54  &   84  &   64  &   30  &   53  &   42  &   28  \\
6   &   Andy Murray     &   60  &   74  &   51  &   81  &   61  &   33  &   55  &   44  &   32  \\
7   &   Tomas Berdych       &   58  &   78  &   54  &   86  &   63  &   30  &   54  &   39  &   25  \\
8   &   Milos Raonic        &   61  &   83  &   54  &   90  &   69  &   27  &   45  &   39  &   16  \\
9   &   Marin Cilic     &   57  &   79  &   50  &   85  &   66  &   30  &   50  &   37  &   22  \\
10  &   David Ferrer        &   63  &   69  &   52  &   79  &   62  &   34  &   56  &   43  &   33  \\
11  &   Grigor Dimitrov     &   61  &   77  &   54  &   86  &   64  &   29  &   50  &   42  &   22  \\
12  &   Jo-Wilfried Tsonga      &   62  &   77  &   54  &   87  &   70  &   29  &   45  &   39  &   18  \\
13  &   Ernests Gulbis      &   60  &   78  &   51  &   85  &   64  &   29  &   50  &   40  &   21  \\
14  &   Feliciano Lopez     &   60  &   78  &   53  &   86  &   69  &   25  &   45  &   34  &   15  \\
15  &   Roberto Bautista Agut   &   69  &   70  &   53  &   81  &   65  &   31  &   53  &   43  &   26  \\
16  &   Kevin Anderson      &   66  &   75  &   51  &   86  &   69  &   26  &   48  &   35  &   18  \\
17  &   Tommy Robredo       &   64  &   74  &   54  &   85  &   64  &   29  &   49  &   37  &   21  \\
18  &   Gael Monfils        &   65  &   73  &   50  &   82  &   62  &   34  &   50  &   40  &   27  \\
19  &   John Isner      &   68  &   79  &   57  &   93  &   75  &   24  &   42  &   24  &   9   \\
20  &   Fabio Fognini       &   59  &   69  &   48  &   73  &   56  &   32  &   51  &   43  &   27  \\
21  &   Gilles Simon        &   56  &   71  &   51  &   78  &   63  &   31  &   53  &   45  &   26  \\
23  &   Alexandr Dolgopolov &   55  &   75  &   52  &   82  &   62  &   30  &   51  &   38  &   23  \\
24  &   Philipp Kohlschreiber   &   59  &   73  &   56  &   84  &   62  &   29  &   50  &   43  &   23  \\
25  &   Julien Benneteau    &   64  &   72  &   50  &   82  &   63  &   28  &   49  &   37  &   21  \\
26  &   Richard Gasquet     &   65  &   73  &   56  &   84  &   59  &   28  &   50  &   38  &   21  \\
28  &   Leonardo Mayer      &   60  &   75  &   54  &   82  &   61  &   29  &   49  &   38  &   19  \\
29  &   Jeremy Chardy   &       59  &   77  &   50  &   82  &   63  &   27  &   50  &   39  &   19  \\
31  &   Lukas Rosol &       57  &   72  &   50  &   78  &   60  &   27  &   47  &   41  &   17  \\
32  &   Santiago Giraldo        &   63  &   70  &   50  &   78  &   63  &   30  &   49  &   42  &   23  \\
33  &   Fernando Verdasco       &   66  &   72  &   51  &   82  &   66  &   30  &   49  &   39  &   22  \\
35  &   Sam Querrey &       61  &   79  &   52  &   87  &   67  &   26  &   46  &   37  &   16  \\
36  &   Guillermo Garcia-Lopez  &   57  &   69  &   48  &   74  &   58  &   32  &   50  &   40  &   26  \\
38  &   Yen-Hsun Lu &       64  &   71  &   52  &   80  &   66  &   26  &   51  &   41  &   21  \\
39  &   Dominic Thiem   &       58  &   71  &   51  &   77  &   60  &   29  &   50  &   37  &   22  \\
40  &   Benjamin Becker     &   59  &   74  &   49  &   79  &   60  &   27  &   49  &   37  &   20  \\
41  &   Pablo Andujar   &       66  &   64  &   52  &   73  &   57  &   32  &   53  &   42  &   29  \\
42  &   Jack Sock   &       59  &   76  &   54  &   86  &   69  &   25  &   47  &   42  &   19  \\
43  &   Jerzy Janowicz      &   60  &   74  &   47  &   79  &   60  &   27  &   48  &   39  &   18  \\
45  &   Andreas Seppi       &   57  &   70  &   52  &   77  &   64  &   31  &   49  &   37  &   22  \\
46  &   Marcel Granollers       &   61  &   69  &   48  &   73  &   54  &   29  &   49  &   40  &   23  \\
49  &   Denis Istomin   &       68  &   72  &   51  &   81  &   63  &   27  &   48  &   40  &   19  \\
54  &   Joao Sousa  &       60  &   67  &   48  &   72  &   61  &   29  &   46  &   38  &   19  \\
60  &   Federico Delbonis       &   62  &   71  &   53  &   81  &   63  &   27  &   49  &   36  &   18  \\
73  &   Jarkko Nieminen     &   65  &   70  &   49  &   77  &   58  &   26  &   52  &   38  &   20  \\
75  &   Marinko Matosevic       &   59  &   71  &   49  &   76  &   63  &   28  &   50  &   38  &   20  \\
87  &   Edouard Roger-Vasselin      &   67  &   70  &   51  &   82  &   68  &   26  &   48  &   37  &   18  \\
\hline
\end{tabular}
\caption{Data (Source: \emph{http://www.atpworldtour.com/}).}
\label{t:tabla3}
\end{table}

We present the results obtained by distinguishing between whether information from experts is available or not.

Suppose that some experts are asked to give their opinion about
whether or not to include each of the factors listed in Table
\ref{t:tabla2} in a DEA model aimed at providing measures of
effectiveness of the game performance of players. We might have observed  that
all the experts agree in considering $y_4$ and $y_9$, 80\% out of
them would include $y_2$, $y_3$, $y_5$, $y_6$, $y_7$ and $y_8$,
while only 40\% would be in favor of incorporating $y_1$. This would
be showing that the most important game factor in the opinion of
experts are those that have to do with winning games ($y_4$ and
$y_9$), whereas $y_1$, which does not have a direct influence on the
result of the matches, is viewed as a less relevant game factor.
With such information from experts, the probabilities of selection
of candidates can be estimed as follows: $p_1=0.4$, $p_2=0.8$,
$p_3=0.8$, $p_4=1$, $p_5=0.8$, $p_6=0.8$, $p_7=0.8$, $p_8=0.8$ and
$p_9=1$. The efficiency scores (\ref{esperanza1.1}) associated with
that specification of the probabilities (together with the corresponding standard deviations) are recorded in Table
\ref{t:tabla5}.

\setlength{\tabcolsep}{4.1 mm}

\begin{table}[htb]
\centering
\scriptsize
\begin{tabular}{rrllrrr}\hline
&&\multicolumn{5}{c}{$p_1=0.4$;\, $p_2=p_3=p_5=p_6=p_7=p_8=0.8$;\,  $p_4=p_9=1.0$  }  \\  \cline{3-7}
ATP	&&	&&$E_p(\Theta_0)$&&$\sqrt{V_p(\Theta_0)}$\\ \hline
1	&&	Novak	Djokovic	&&	1.00000	&&	0.00000		\\	
2	&&	Roger	Federer	&&	1.00000	&&	0.00000		\\	
3	&&	Rafael	Nadal	&&	1.00000	&&	0.00000		\\	
4	&&	Stan	Wawrinka	&&	0.99003	&&	0.01369		\\	
5	&&	Kei	Nishikori	&&	0.96147	&&	0.00334		\\	
6	&&	Andy	Murray	&&	0.98238	&&	0.01122		\\	
7	&&	Tomas	Berdych	&&	0.99052	&&	0.01435		\\	
8	&&	Milos	Raonic	&&	0.99764	&&	0.00532		\\	
9	&&	Marin	Cilic	&&	0.98129	&&	0.01760	\\	
10	&&	David	Ferrer	&&	0.98289	&&	0.00957		\\	
11	&&	Grigor	Dimitrov	&&	0.98108	&&	0.00579		\\	
12	&&	Jo-Wilfried	Tsonga	&&	0.98172	&&	0.01036		\\	
13	&&	Ernests	Gulbis	&&	0.97374	&&	0.01300		\\	
14	&&	Feliciano	Lopez	&&	0.96362	&&	0.00744		\\	
15	&&	Roberto	Bautista Agut	&&	0.97068	&&	0.01465		\\	
16	&&	Kevin	Anderson	&&	0.96617	&&	0.00988		\\	
17	&&	Tommy	Robredo	&&	0.94801	&&	0.00922		\\	
18	&&	Gael	Monfils	&&	0.97947	&&	0.01824		\\	
19	&&	John	Isner	&&	1.00000	&&	0.00000		\\	
20	&&	Fabio	Fognini	&&	0.92927	&&	0.01139		\\	
21	&&	Gilles	Simon	&&	0.95820	&&	0.01039		\\	
23	&&	Alexandr	Dolgopolov	&&	0.95108	&&	0.01398		\\	
24	&&	Philipp	Kohlschreiber	&&	0.98006	&&	0.01537		\\	
25	&&	Julien	Benneteau	&&	0.92934	&&	0.01280		\\	
26	&&	Richard	Gasquet	&&	0.96783	&&	0.01383		\\	
28	&&	Leonardo	Mayer	&&	0.94988	&&	0.00270		\\	
29	&&	Jeremy	Chardy	&&	0.95884	&&	0.01904		\\	
31	&&	Lukas	Rosol	&&	0.92203	&&	0.01788		\\	
32	&&	Santiago	Giraldo	&&	0.92950	&&	0.00862		\\	
33	&&	Fernando	Verdasco	&&	0.94870	&&	0.01137		\\	
35	&&	Sam	Querrey	&&	0.96701	&&	0.00704		\\	
36	&&	Guillermo	Garcia-Lopez	&&	0.92441	&&	0.01561		\\	
38	&&	Yen-Hsun	Lu	&&	0.95274	&&	0.01247		\\	
39	&&	Dominic	Thiem	&&	0.91262	&&	0.00916		\\	
40	&&	Benjamin	Becker	&&	0.92654	&&	0.01875		\\	
41	&&	Pablo	Andujar	&&	0.93772	&&	0.00806		\\	
42	&&	Jack	Sock	&&	0.98524	&&	0.01606		\\	
43	&&	Jerzy	Janowicz	&&	0.92598	&&	0.01701		\\	
45	&&	Andreas	Seppi	&&	0.92717	&&	0.00683		\\	
46	&&	Marcel	Granollers	&&	0.89783	&&	0.01687		\\	
49	&&	Denis	Istomin	&&	0.94533	&&	0.02649		\\	
54	&&	Joao	Sousa	&&	0.88430	&&	0.00786		\\	
60	&&	Federico	Delbonis	&&	0.92443	&&	0.00748		\\	
73	&&	Jarkko	Nieminen	&&	0.92289	&&	0.02003		\\	
75	&&	Marinko	Matosevic	&&	0.92032	&&	0.01334		\\	
87	&&	Edouard	Roger-Vasselin	&&	0.95572	&&	0.01918		\\	\hline
\end{tabular}
\caption{Robust efficiency scores reflecting expert opinion.}
\label{t:tabla5}
\end{table}

We can see that the top 3 players in the ATP ranking also achieve the maximum rating when evaluated with the robust DEA efficiency scores: Djokovic, Federer and Nadal. Note, however, that Isner, which is also scored 1, occupied the  $19^{th}$ position in the ATP ranking. This shows that the assessment of Isner is better when game performance instead of competitive performance is evaluated, for this specification of $p$. In contrast, this analysis reveals that other players with lower efficiency scores, like Nishikori (0.961), perform better in competition (he ranked $5^{th}$).

As discussed in Section 4, if information from experts is not
available, the scores $E^e(\Theta_0)$, which coincide with
$E_{\UU}(\Theta_0)$, and the scores $E_{\overline\UU}(\Theta_0^{})$ may yield
useful information. These two scores are reported in Table
\ref{t:tabla6}, together with the corresponding standard deviations (which are somewhat larger than the standard deviations of the scores in Table
\ref{t:tabla5}.

\setlength{\tabcolsep}{2.6 mm}

\begin{table}[htb]
\centering
\scriptsize
\begin{tabular}{rlrrrrrrr}\hline
ATP && \multicolumn{6}{c}{} \\
Ranking &Player &   &\multicolumn{1}{c}{$E_{\overline\UU}(\Theta_0^{})$} &\multicolumn{1}{c}{$\sqrt{V_{\overline\UU}(\Theta_0^{})}$} & & \multicolumn{1}{c}{$E^e(\Theta_0)$}&\multicolumn{1}{c}{$\sqrt{V^e(\Theta_0^{})}$}\\ \hline
1	&	Novak	Djokovic	&	&	0.98992	&	0.02528	&	&	0.99467	&	0.01571		\\
2	&	Roger	Federer	&	&	0.98550	&	0.04209	&	&	0.99302	&	0.02649		\\
3	&	Rafael	Nadal	&	&	0.99340	&	0.02364	&	&	0.99739	&	0.01372		\\
4	&	Stan	Wawrinka	&	&	0.96032	&	0.06051	&	&	0.96259	&	0.04108		\\
5	&	Kei	Nishikori	&	&	0.95686	&	0.02159	&	&	0.95371	&	0.01229		\\
6	&	Andy	Murray	&	&	0.96392	&	0.03703	&	&	0.96333	&	0.02641		\\
7	&	Tomas	Berdych	&	&	0.96464	&	0.05112	&	&	0.96600	&	0.03437		\\
8	&	Milos	Raonic	&	&	0.96831	&	0.07563	&	&	0.97674	&	0.04701		\\
9	&	Marin	Cilic	&	&	0.95207	&	0.06091	&	&	0.95242	&	0.04247		\\
10	&	David	Ferrer	&	&	0.96641	&	0.03692	&	&	0.96622	&	0.02542		\\
11	&	Grigor	Dimitrov	&	&	0.97640	&	0.01520	&	&	0.97442	&	0.01080		\\
12	&	Jo-Wilfried	Tsonga	&	&	0.95713	&	0.06473	&	&	0.96110	&	0.03774		\\
13	&	Ernests	Gulbis	&	&	0.94822	&	0.05713	&	&	0.94836	&	0.03728		\\
14	&	Feliciano	Lopez	&	&	0.93799	&	0.07853	&	&	0.94292	&	0.04579		\\
15	&	Roberto	Bautista Agut	&	&	0.96293	&	0.04204	&	&	0.96425	&	0.02814		\\
16	&	Kevin	Anderson	&	&	0.94745	&	0.06841	&	&	0.95185	&	0.03969		\\
17	&	Tommy	Robredo	&	&	0.93902	&	0.05284	&	&	0.93975	&	0.02761		\\
18	&	Gael	Monfils	&	&	0.95848	&	0.04582	&	&	0.95741	&	0.03163		\\
19	&	John	Isner	&	&	0.96937	&	0.11114	&	&	0.98667	&	0.06277		\\
20	&	Fabio	Fognini	&	&	0.91579	&	0.04771	&	&	0.91055	&	0.02786		\\
21	&	Gilles	Simon	&	&	0.94054	&	0.04487	&	&	0.93859	&	0.02809		\\
23	&	Alexandr	Dolgopolov	&	&	0.93136	&	0.05270	&	&	0.92907	&	0.03143		\\
24	&	Philipp	Kohlschreiber	&	&	0.95484	&	0.05378	&	&	0.95472	&	0.03545		\\
25	&	Julien	Benneteau	&	&	0.92161	&	0.05456	&	&	0.91868	&	0.02878		\\
26	&	Richard	Gasquet	&	&	0.94954	&	0.05970	&	&	0.95132	&	0.03656		\\
28	&	Leonardo	Mayer	&	&	0.94966	&	0.01981	&	&	0.94546	&	0.00802		\\
29	&	Jeremy	Chardy	&	&	0.93265	&	0.06401	&	&	0.93022	&	0.04035		\\
31	&	Lukas	Rosol	&	&	0.90260	&	0.06551	&	&	0.89649	&	0.03520		\\
32	&	Santiago	Giraldo	&	&	0.91925	&	0.04587	&	&	0.91580	&	0.02293		\\
33	&	Fernando	Verdasco	&	&	0.93944	&	0.04965	&	&	0.93934	&	0.02768		\\
35	&	Sam	Querrey	&	&	0.94287	&	0.07236	&	&	0.94693	&	0.04235		\\
36	&	Guillermo	Garcia-Lopez	&	&	0.90890	&	0.05076	&	&	0.90174	&	0.03105		\\
38	&	Yen-Hsun	Lu	&	&	0.93516	&	0.05439	&	&	0.93480	&	0.03005		\\
39	&	Dominic	Thiem	&	&	0.90117	&	0.05250	&	&	0.89569	&	0.02571		\\
40	&	Benjamin	Becker	&	&	0.90732	&	0.06147	&	&	0.90126	&	0.03619		\\
41	&	Pablo	Andujar	&	&	0.93193	&	0.04401	&	&	0.93072	&	0.02510		\\
42	&	Jack	Sock	&	&	0.95629	&	0.06702	&	&	0.95888	&	0.04094		\\
43	&	Jerzy	Janowicz	&	&	0.90613	&	0.06477	&	&	0.90113	&	0.03790		\\
45	&	Andreas	Seppi	&	&	0.91392	&	0.05198	&	&	0.91156	&	0.02741		\\
46	&	Marcel	Granollers	&	&	0.88992	&	0.05478	&	&	0.88115	&	0.02775		\\
49	&	Denis	Istomin	&	&	0.94078	&	0.06500	&	&	0.94003	&	0.04246		\\
54	&	Joao	Sousa	&	&	0.87863	&	0.05936	&	&	0.87207	&	0.02565		\\
60	&	Federico	Delbonis	&	&	0.91450	&	0.06086	&	&	0.91361	&	0.02985		\\
73	&	Jarkko	Nieminen	&	&	0.91497	&	0.06265	&	&	0.91095	&	0.03712		\\
75	&	Marinko	Matosevic	&	&	0.90415	&	0.05746	&	&	0.89856	&	0.03113		\\
87	&	Edouard	Roger-Vasselin	&	&	0.93960	&	0.06824	&	&	0.94121	&	0.04184		\\\hline
\end{tabular}
\caption{Unconditional expected score  $E_{\overline\UU}(\Theta_0^{})$  and the maximum entropy expected score $E^e(\Theta_0)$.}
\label{t:tabla6}
\end{table}

As said before, $E_{\overline\UU}(\Theta_0^{})$ somehow summarizes the values $E_{\overline{\mathrm{p}}}(\Theta_0)$ across the different values of a common $\overline{\mathrm{p}}$. Table \ref{t:tabla4} reports the scores $E_{\overline{\mathrm{p}}}(\Theta_0)$ for some values of $\overline{\mathrm{p}}$. This information can complement that provided by Table \ref{t:tabla6}. Note, in particular, that the scores $E^e(\Theta_0)$ are actually the ones under column $\overline{\mathrm{p}}=0.5$ in Table \ref{t:tabla4}.

\setlength{\tabcolsep}{1.4 mm}

\begin{table}[htb]
\tiny
\begin{tabular}{rlrrrrrrrrrr}\hline
ATP && \multicolumn{10}{c} { $\overline{\mathrm{p}}$ }\\
\cline{3-12}
    &       &   0.1 &   0.2 &   0.3 &   0.4 &   0.5 &   0.6 &   0.7 &   0.8 &   0.9 &   1   \\ \hline
1   &   Novak Djokovic  &   0.97099 &   0.97167 &   0.98048 &   0.98892 &   0.99467 &   0.99787 &   0.99934 &   0.99987 &   0.99999 &   1.00000 \\
2   &   Roger Federer   &   0.95663 &   0.95867 &   0.97238 &   0.98488 &   0.99302 &   0.99732 &   0.99921 &   0.99985 &   0.99999 &   1.00000 \\
3   &   Rafael Nadal    &   0.97906 &   0.98069 &   0.98770 &   0.99373 &   0.99739 &   0.99915 &   0.99980 &   0.99997 &   1.00000 &   1.00000 \\
4   &   Stan Wawrinka   &   0.92217 &   0.91426 &   0.92913 &   0.94732 &   0.96259 &   0.97410 &   0.98263 &   0.98914 &   0.99431 &   0.99865 \\
5   &   Kei Nishikori   &   0.95726 &   0.94561 &   0.94594 &   0.94972 &   0.95371 &   0.95699 &   0.95948 &   0.96131 &   0.96259 &   0.96340 \\
6   &   Andy Murray &   0.94381 &   0.93456 &   0.94210 &   0.95327 &   0.96333 &   0.97124 &   0.97726 &   0.98188 &   0.98557 &   0.98862 \\
7   &   Tomas Berdych   &   0.93177 &   0.92443 &   0.93711 &   0.95279 &   0.96600 &   0.97605 &   0.98370 &   0.98984 &   0.99514 &   1.00000 \\
8   &   Milos Raonic    &   0.92162 &   0.92009 &   0.94043 &   0.96128 &   0.97674 &   0.98680 &   0.99297 &   0.99664 &   0.99878 &   1.00000 \\
9   &   Marin Cilic &   0.91717 &   0.90617 &   0.91945 &   0.93708 &   0.95242 &   0.96435 &   0.97346 &   0.98062 &   0.98651 &   0.99163 \\
10  &   David Ferrer    &   0.94732 &   0.93955 &   0.94704 &   0.95731 &   0.96622 &   0.97310 &   0.97836 &   0.98253 &   0.98599 &   0.98901 \\
11  &   Grigor Dimitrov &   0.97395 &   0.96739 &   0.96829 &   0.97130 &   0.97442 &   0.97711 &   0.97932 &   0.98116 &   0.98268 &   0.98392 \\
12  &   Jo-Wilfried Tsonga  &   0.92086 &   0.91459 &   0.93019 &   0.94761 &   0.96110 &   0.97044 &   0.97689 &   0.98167 &   0.98561 &   0.98921 \\
13  &   Ernests Gulbis  &   0.91853 &   0.90697 &   0.91875 &   0.93465 &   0.94836 &   0.95886 &   0.96675 &   0.97282 &   0.97766 &   0.98164 \\
14  &   Feliciano Lopez &   0.90002 &   0.89019 &   0.90774 &   0.92773 &   0.94292 &   0.95293 &   0.95929 &   0.96347 &   0.96640 &   0.96861 \\
15  &   Roberto Bautista Agut   &   0.93961 &   0.93244 &   0.94220 &   0.95430 &   0.96425 &   0.97156 &   0.97683 &   0.98069 &   0.98357 &   0.98571 \\
16  &   Kevin Anderson  &   0.91127 &   0.90295 &   0.91914 &   0.93761 &   0.95185 &   0.96151 &   0.96785 &   0.97210 &   0.97503 &   0.97707 \\
17  &   Tommy Robredo   &   0.91825 &   0.90662 &   0.91679 &   0.92968 &   0.93975 &   0.94665 &   0.95140 &   0.95496 &   0.95793 &   0.96064 \\
18  &   Gael Monfils    &   0.93252 &   0.92273 &   0.93239 &   0.94566 &   0.95741 &   0.96684 &   0.97445 &   0.98085 &   0.98640 &   0.99132 \\
19  &   John Isner  &   0.90490 &   0.91171 &   0.94282 &   0.96985 &   0.98667 &   0.99514 &   0.99863 &   0.99975 &   0.99998 &   1.00000 \\
20  &   Fabio Fognini   &   0.90873 &   0.88681 &   0.89046 &   0.90067 &   0.91055 &   0.91846 &   0.92452 &   0.92924 &   0.93302 &   0.93612 \\
21  &   Gilles Simon    &   0.92314 &   0.90843 &   0.91570 &   0.92774 &   0.93859 &   0.94701 &   0.95334 &   0.95815 &   0.96188 &   0.96480 \\
23  &   Alexandr Dolgopolov &   0.91130 &   0.89505 &   0.90367 &   0.91722 &   0.92907 &   0.93816 &   0.94515 &   0.95084 &   0.95579 &   0.96024 \\
24  &   Philipp Kohlschreiber   &   0.92439 &   0.91443 &   0.92614 &   0.94148 &   0.95472 &   0.96505 &   0.97312 &   0.97973 &   0.98542 &   0.99061 \\
25  &   Julien Benneteau    &   0.90572 &   0.88779 &   0.89542 &   0.90789 &   0.91868 &   0.92685 &   0.93309 &   0.93819 &   0.94267 &   0.94684 \\
26  &   Richard Gasquet &   0.91743 &   0.90793 &   0.92144 &   0.93792 &   0.95132 &   0.96110 &   0.96821 &   0.97363 &   0.97799 &   0.98164 \\
28  &   Leonardo Mayer  &   0.95676 &   0.94305 &   0.94117 &   0.94304 &   0.94546 &   0.94747 &   0.94894 &   0.94998 &   0.95071 &   0.95119 \\
29  &   Jeremy Chardy   &   0.90449 &   0.88853 &   0.89950 &   0.91575 &   0.93022 &   0.94173 &   0.95088 &   0.95851 &   0.96523 &   0.97150 \\
31  &   Lukas Rosol &   0.88787 &   0.86394 &   0.87055 &   0.88400 &   0.89649 &   0.90662 &   0.91489 &   0.92201 &   0.92847 &   0.93452 \\
32  &   Santiago Giraldo    &   0.91185 &   0.89239 &   0.89693 &   0.90683 &   0.91580 &   0.92257 &   0.92739 &   0.93077 &   0.93303 &   0.93439 \\
33  &   Fernando Verdasco   &   0.91974 &   0.90702 &   0.91621 &   0.92889 &   0.93934 &   0.94689 &   0.95223 &   0.95613 &   0.95906 &   0.96128 \\
35  &   Sam Querrey &   0.90697 &   0.89753 &   0.91369 &   0.93242 &   0.94693 &   0.95677 &   0.96324 &   0.96763 &   0.97084 &   0.97340 \\
36  &   Guillermo Garcia-Lopez  &   0.90186 &   0.87721 &   0.88028 &   0.89093 &   0.90174 &   0.91080 &   0.91817 &   0.92436 &   0.92982 &   0.93492 \\
38  &   Yen-Hsun Lu &   0.91328 &   0.89939 &   0.90926 &   0.92312 &   0.93480 &   0.94345 &   0.94970 &   0.95419 &   0.95734 &   0.95933 \\
39  &   Dominic Thiem   &   0.89665 &   0.87221 &   0.87585 &   0.88611 &   0.89569 &   0.90304 &   0.90850 &   0.91267 &   0.91596 &   0.91864 \\
40  &   Benjamin Becker &   0.89213 &   0.86865 &   0.87507 &   0.88854 &   0.90126 &   0.91166 &   0.92012 &   0.92735 &   0.93385 &   0.93996 \\
41  &   Pablo Andujar   &   0.92257 &   0.90692 &   0.91260 &   0.92247 &   0.93072 &   0.93639 &   0.94000 &   0.94230 &   0.94379 &   0.94481 \\
42  &   Jack Sock   &   0.91650 &   0.90911 &   0.92521 &   0.94384 &   0.95888 &   0.96996 &   0.97826 &   0.98496 &   0.99086 &   0.99640 \\
43  &   Jerzy Janowicz  &   0.88965 &   0.86646 &   0.87375 &   0.88796 &   0.90113 &   0.91165 &   0.91986 &   0.92642 &   0.93184 &   0.93651 \\
45  &   Andreas Seppi   &   0.90325 &   0.88298 &   0.88930 &   0.90121 &   0.91156 &   0.91898 &   0.92394 &   0.92714 &   0.92915 &   0.93029 \\
46  &   Marcel Granollers   &   0.88988 &   0.86118 &   0.86243 &   0.87166 &   0.88115 &   0.88908 &   0.89563 &   0.90129 &   0.90644 &   0.91129 \\
49  &   Denis Istomin   &   0.91008 &   0.89766 &   0.91004 &   0.92626 &   0.94003 &   0.95069 &   0.95911 &   0.96615 &   0.97233 &   0.97792 \\
54  &   Joao Sousa  &   0.88095 &   0.85106 &   0.85306 &   0.86283 &   0.87207 &   0.87892 &   0.88361 &   0.88668 &   0.88856 &   0.88950 \\
60  &   Federico Delbonis   &   0.89905 &   0.88114 &   0.88999 &   0.90310 &   0.91361 &   0.92078 &   0.92551 &   0.92876 &   0.93111 &   0.93289 \\
73  &   Jarkko Nieminen &   0.89592 &   0.87568 &   0.88401 &   0.89822 &   0.91095 &   0.92099 &   0.92892 &   0.93551 &   0.94124 &   0.94648 \\
75  &   Marinko Matosevic   &   0.89378 &   0.86961 &   0.87474 &   0.88694 &   0.89856 &   0.90788 &   0.91509 &   0.92067 &   0.92497 &   0.92821 \\
87  &   Edouard Roger-Vasselin  &   0.90610 &   0.89435 &   0.90869 &   0.92659 &   0.94121 &   0.95183 &   0.95950 &   0.96525 &   0.96979 &   0.97348 \\
    \hline
\end{tabular}
\caption{$E_\mathrm{P}(\Theta_0^{\RR})$ for different values of
$\overline{\mathrm{P}}$.} \label{t:tabla4}
\end{table}

Table \ref{t:tabla6} also shows that the top 3 players of the ATP
ranking are the ones with the larger values of
$E_{\overline\UU}(\Theta_0^{})$: Djokovic (0.9899), Federer (0.9855)
and Nadal (0.9934). In order to provide a graphical display of how
the scores change as $\overline{\mathrm{p}}$ varies,  we have
depicted  graphically the values in the rows of Table
\ref{t:tabla4} corresponding to these three players (Figure
\ref{fig5:}). We can see that Nadal outperforms Djokovic and
Ferderer irrespective of the value of $\overline{\mathrm{p}}$.
Frequently, the scores $E_{\overline{\mathrm{p}}}(\Theta_0)$ are
larger with $\overline{\mathrm{p}}$. Note that a low
$\overline{\mathrm{p}}$ implies  a low probability of input/output
selection, also for the inputs/outputs that may  benefit the
player under assessment, while if $\overline{\mathrm{p}}$ is high, the best factors
for  each player will be in the DEA analysis with high probability.

\begin{figure}[t]
\begin{center}
                   \includegraphics[width=1.\textwidth]{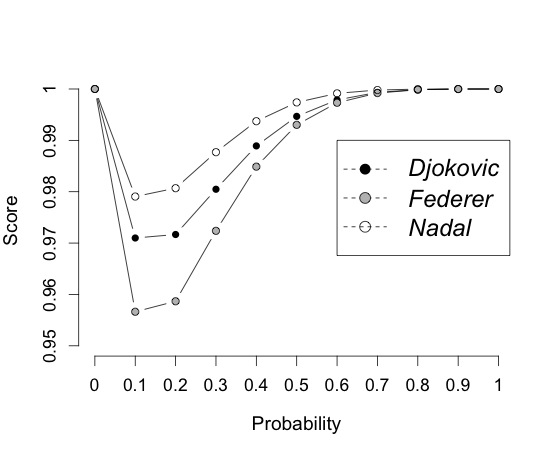}
                \caption{Djokovic vs Federer vs Nadal.}
                \label{fig5:}
                \end{center}
\end{figure}

We have also analyzed the case of Isner, in particular against that of Mayer. Figure \ref{fig7:} shows that, while the scores of Mayer remain stable for different values of
 $\overline{\mathrm{p}}$, these are quite smaller for low values of this probability. For instance, Isner, which has a very specialized game based on his service, is penalized
 when $\overline{\mathrm{p}}$ is low because $\overline{\mathrm{p}}$ is the probability of selecting  his best factor, the factor
 $y_1$.

\begin{figure}[t]
\begin{center}
                    \includegraphics[width=1.\textwidth]{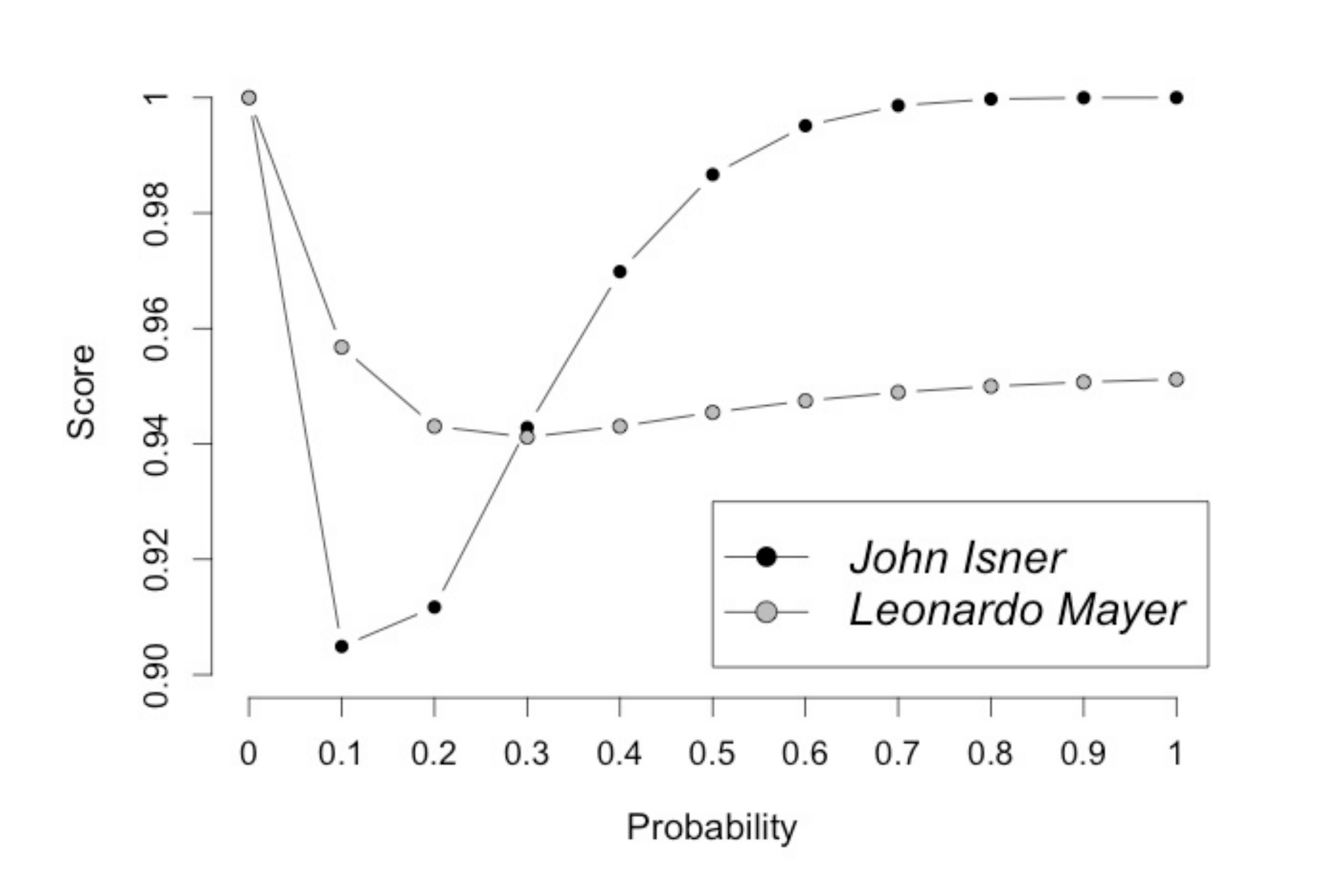}
                \caption{John Isner vs Leonardo Mayer.}
                \label{fig7:}
                \end{center}
\end{figure}

\section{Concluding Remarks}

We have proposed a probabilistic/combinatorial approach for the assessment of DMUs in DEA by using efficiency scores which are robust against the selection of inputs/outputs. Robust efficient scores are defined on the basis of the information from expert opinion, by using the entropy principle and following a bayesian approach.  Computing these scores requires solving an exponential number of linear problems. In order to do so, we have developed an exact algorithm that reduces approximately half the time and the number of problems required. The method has been presented within a conventional framework wherein efficiency is assessed by means of the classical radial DEA models. Obviously, the approach proposed can be extended for use, for example, with external factors (non-controlled variables) whose incorporation into the analysis is to be considered, provided that an appropriate efficiency model is chosen in order to deal with such type of inputs/outputs (see \cite{Aristovnik2013} and \cite{Aristovnik2014} for some empirical work that considers external factors). In that sense, the extension of the method for use with other DEA models is also straightforward, because it only need from them the efficiency scores they provide. In particular, we can derive robust efficiency scores from non-radial DEA models (by using, for example, the enhanced Russell graph measure in \cite{ERG}), which would be of special interest when models that minimize the distance to the efficient frontier are used (see \cite{JPA2007}). In the latter case, the development of an heuristic algorithm for reducing computational time would become a key issue, due to the complexity of the efficiency models involved.

\section*{References}
\bibliographystyle{abbrvnat}

\end{document}